\documentclass[a4paper,11pt]{article}
%
%
\usepackage{graphicx}
\usepackage{amsfonts}
\usepackage{amsmath}
\usepackage{hyperref}
%
%
\title{Constructing orientable sequences}
\author{Chris J. Mitchell and Peter R. Wild\\Information Security Group, Royal Holloway,
University of London\\
\href{mailto:me@chrismitchell.net}{me@chrismitchell.net};~~~~\href{mailto:peterrwild@gmail.com}{peterrwild@gmail.com}}
\date{7th January 2022}
%
%

\parskip = 1ex
\parindent = 0pt

\newtheorem{lemma}{Lemma}[section]
\newtheorem{theorem}[lemma]{Theorem}
\newtheorem{corollary}[lemma]{Corollary}
\newtheorem{definition}[lemma]{Definition}
\newtheorem{remark}[lemma]{Remark}
\newtheorem{example}[lemma]{Example}

\newenvironment{proof}[1][Proof]{\begin{trivlist}
\item[\hskip \labelsep {\bfseries #1}]}{\end{trivlist}}
\newcommand{\qed}{\nobreak \ifvmode \relax \else
      \ifdim\lastskip<1.5em \hskip-\lastskip
      \hskip1.5em plus0em minus0.5em \fi \nobreak
      \vrule height0.75em width0.5em depth0.25em\fi}
\parskip = 1ex
\parindent = 0pt


\begin{document}

\maketitle

\begin{abstract}
This paper describes new, simple, recursive methods of construction for \emph{orientable
sequences}, i.e.\ periodic binary sequences in which any $n$-tuple occurs at most once in a period
in either direction. As has been previously described, such sequences have potential applications
in automatic position-location systems, where the sequence is encoded onto a surface and a reader
needs only examine $n$ consecutive encoded bits to determine its location and orientation on the
surface. The only previously described method of construction (due to Dai et al.)\ is somewhat
complex, whereas the new techniques are simple to both describe and implement.  The methods of
construction cover both the standard `infinite periodic' case, and also the aperiodic, finite
sequence, case. Both the new methods build on the Lempel homomorphism, first introduced as a means
of recursively generating de Bruijn sequences.
\end{abstract}

\section{Introduction} \label{Intro}  \label{sec_intro}

In this paper we are concerned with binary sequences with the property that any $n$-tuple of
consecutive bits occurs either just once in a period, in the case of a periodic infinite sequence,
or just once in a finite sequence (the aperiodic case).  One important special case of such
sequences are the de Bruijn sequences --- see, for example, \cite{Fredricksen82}. These sequences,
sometimes referred to as shift register sequences (see Golomb, \cite{Golomb67}), have been very
widely studied and have a range of applications in coding and cryptography.  One application which
is of particular relevance to this paper, is that of \emph{position location}, i.e.\ use of an
encoding of an $n$-window sequence onto a surface that allows the location of any point on the
surface by examining just $n$ consecutive entries of the sequence (see, for example, Burns and
Mitchell \cite{Burns92,Burns93} and Petriu \cite{C35}).

We are particularly interested in the special case of \emph{orientable sequences} i.e.\ where, for
given order $n$, any $n$-tuple of consecutive values occurs just once in a period \emph{in either
direction}, in the case of an infinite sequence, or just once in either direction in a finite
sequence (the aperiodic case).  These sequences have position-location applications in the case
where the reader of such a sequence wishes to determine both its position and its direction of
travel.  Such sequences were introduced some 30 years ago --- see Burns and Mitchell \cite{Burns93}
and Dai et al.\ \cite{Dai93}. More recent work on the use of sequences for position location
includes that of Szentandr{\'{a}}si et al.\ \cite{Szentandrasi12}, Bruckstein et al.\
\cite{Bruckstein12}, Berkowitz and Kopparty \cite{Berkowitz16}, and Chee et al.
\cite{Chee19,Chee20}.  However, none of this more recent work provides any methods for constructing
orientable sequences.  Observe that orientable sequences are a particular type of universal cycle
--- see Chung et al.\ \cite{Chung92} and Jackson et al.\ \cite{Jackson09}.

Observe that no periodic orientable sequence exists for $n<5$.  A simple example of a periodic
orientable sequence of order $5$ is provided by the sequence of period $m=6$ with generating cycle
$[001101]$, which has optimally long period (see Table~\ref{Dai-table}). Sawada \footnote{See
\url{http://debruijnsequence.org/db/orientable}} provides examples of orders $6$ and $7$ of periods
16 and 36 respectively,[00101011100000] and [0011011110011010100111010010], which were shown to be
optimally long by exhaustive search.

As previously mentioned, Dai et al.\ \cite{Dai93} give a method of construction for orientable
sequences for every $n\geq5$; they also provide an upper bound on the period of such sequences.
Whilst the method of construction is shown to generate sequences of asymptotically optimal periods,
i.e.\ the ratio of the period of a generated sequence with the upper bound tends to 1 as
$n\rightarrow\infty$, the method itself is somewhat complex. For a given order $n$, it involves
working with the set of cycles of length $n$ that are orientable.  These cycles can be divided into
pairs made up of a cycle and its reverse. Using a graph-theoretic argument that is existential
rather than constructive, Dai et al.\ show how one of every pair of the cycles can be joined to
give an orientable sequence of order $n$.

A key motivation for this paper is to work towards addressing the following problem posed by Dai et
al.\ \cite{Dai93}.
\begin{quote}
It is an open problem as to whether a more practical procedure exists for the construction of
orientable sequences that have this asymptotically optimal period.
\end{quote}
It seems likely that the reference to `practical' here means a direct method of construction rather
than one based on an existence proof. We present below a recursive construction method that is much
simpler, and can also generate sequences that are within a fixed factor of asymptotically optimal
period. The fixed factor depends on the period of the `starter sequence', but (using an example
given at \url{http://debruijnsequence.org/db/orientable}) sequences can be constructed with periods
at least 63\% of the maximum possible.  Of course the sequences have period shorter than those due
to Dai et al. \cite{Dai93}, which as noted above are of asymptotically optimal period.

We also examine here the aperiodic case, i.e.\ where the sequence is of finite length.  Whilst this
case was briefly examined by Burns and Mitchell \cite{Burns93}, the only previously known method of
construction was a trivial derivation from the periodic case (see \cite{Burns93} Lemma 1, and also
as outlined in Section~\ref{sec-orientable-construction} below).  Analogously to the periodic case,
we give below a simple, recursive method of constructing such sequences of close to asymptotically
optimal length; 00010111 and 00001101001111 are examples of (optimally long) aperiodic orientable
sequences of orders 4 and 5, respectively.

The remainder of this paper is structured as follows.  In Section~\ref{sec_terminology}, the
terminology used throughout the paper is introduced, together with a range of fundamental results.
The Lempel homomorphism and its application are reviewed in Section~\ref{sec_Lempel}.  A method for
constructing orientable sequences using the Lempel homomorphism is described in
Section~\ref{sec-orientable-construction}; this is followed in
Section~\ref{sec-aperiodic-orientable-construction} by an approach to the construction of aperiodic
orientable sequences.  The paper concludes in Section~\ref{sec-conclusions}.

\section{Terminology and fundamental results}  \label{sec_terminology}

\subsection{Periodic binary sequences with a tuple property}

We are concerned here with periodic binary sequences $S=(s_i)$, where $s_i\in\mathbb{B}=\{0,1\}$,
and where the period $m$ of such a sequence is the smallest positive $m$ such that $s_{i+m}=s_i$
for all $i$. We are particularly interested in finite sub-strings of such sequences ($n$-tuples),
and for $n>0$ we write
\[ \mathbf{s}_n(i) = (s_i,s_{i+1},\ldots,s_{i+n-1}) \]
for the $n$-tuple \emph{appearing at position $i$} in $S$.  We are also interested in the
\emph{weight} of (one period of) a periodic sequence $S=(s_i)$, and we write:
\[ w(S) = \sum_{i=0}^{m-1} s_i \]
where $m$ is the period of $S$.

To simplify certain discussions below, we also introduce the notion of a \emph{generating cycle}.
If $S=(s_i)$ is a periodic binary sequence of period $m$, then the sequence of $m$ values
$s_0,s_1,\ldots,s_{m-1}$ forms the generating cycle of $S$, and clearly the generating cycle
defines the entire sequence. Following Lempel \cite{Lempel70}, we write
$S=[s_0,s_1,\ldots,s_{m-1}]$.

We define an \emph{$n$-window sequence} $S=(s_i)$ (see, for example, \cite{Mitchell96}) to be a periodic
binary sequence of period $m$ with the property that no $n$-tuple appears more than once in a
period of the sequence, i.e.\ with the property that if $\mathbf{s}_n(i)=\mathbf{s}_n(j)$ for some
$i$, $j$, then $i\equiv j\pmod m$.

A \emph{de Bruijn sequence of order $n$} \cite{C45} is then simply an $n$-window sequence of period
$2^n$ (i.e.\ of maximal period), and has the property that every possible $n$-tuple appears once in
a period.

Since we are interested in tuples occurring either forwards or backwards in a sequence we also
introduce the notion of a \emph{reversed} tuple, so that if $\mathbf{u}=(u_0,u_1,\ldots,u_{n-1})$
is a binary $n$-tuple, i.e.\ if $\mathbf{u}\in\mathbb{B}^n$, then
$\mathbf{u}^R=(u_{n-1},u_{n-2},\ldots,u_0)$ is its \emph{reverse}. If a tuple $\mathbf{u}$
satisfies $\mathbf{u}=\mathbf{u}^R$ then we say it is \emph{symmetric}.

The \emph{complement} (or what Lempel \cite{Lempel70} refers to as the \emph{dual}) of a tuple
involves switching every 0 to a 1 and vice versa, and if
$\mathbf{u}=(u_0,u_1,\ldots,u_{n-1})\in\mathbb{B}^n$ we write $\mathbf{\bar{u}}=(u_0\oplus
1,u_1\oplus 1,\ldots,u_{n-1}\oplus 1)$, where here, as throughout, $\oplus$ denotes exclusive-or
(or, equivalently, modulo 2 addition).  In a similar way, we refer to sequences being complementary
if one can be obtained from the other by switching every $1$ to a $0$ and vice versa.

Following Lempel \cite{Lempel70}, we define the \emph{conjugate} of an $n$-tuple to be the tuple
obtained by switching the first bit, i.e.\ if $\mathbf{u}=(u_0,u_1,\ldots,u_{n-1})\in\mathbb{B}^n$,
then the conjugate $\hat{\mathbf{u}}$ of $\mathbf{u}$ is the $n$-tuple
$(u_0\oplus1,u_1,\ldots,u_{n-1})$.

Two $n$-window sequences $S=(s_i)$ and $T=(t_i)$ are said to be \emph{disjoint} if they do not
share an $n$-tuple, i.e.\ if $\mathbf{s}_n(i)\not=\mathbf{t}_n(j)$ for every $i$, $j$.  An
$n$-window sequence is said to be \emph{primitive} if it is disjoint from its complement.

We next give a well known result (closely related to Theorem 2 of Lempel \cite{Lempel70}) showing
how two disjoint $n$-window sequences can be `joined' to create a single $n$-window sequence, if
they contain conjugate $n$-tuples; see also Lemma 3 of Sawada et al.\ \cite{Sawada13}.

\begin{theorem}  \label{thm_cycle_joining}
Suppose $S=(s_i)$ and $T=(t_i)$ are disjoint $n$-window sequences of orders $\ell$ and $m$
respectively.  Moreover suppose $S$ and $T$ contain the conjugate $n$-tuples $\mathbf{u}$ and
$\mathbf{v}$ at positions $i$ and $j$, respectively (i.e.\ $\mathbf{u}=\hat{\mathbf{v}}$).  Then
\[ [s_o,s_1,\ldots,s_{i+n-1},t_{j+n},t_{j+n+1},\ldots,t_{m-1}
,t_0,\ldots,t_{j+n-1},s_{i+n},s_{i+n+1},\ldots,s_{\ell-1}] \]
 is a generating cycle for an
$n$-window sequence of period $\ell+m$.
\end{theorem}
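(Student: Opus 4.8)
The plan is to show that the cyclic sequence defined by the displayed generating cycle --- call it $U$, of length $\ell+m$ --- has all $\ell+m$ of its length-$n$ windows distinct, and in fact that the $n$-tuples occurring in $U$ are precisely the $n$-tuples of $S$ together with those of $T$. The first step is to unpack the conjugacy hypothesis $\mathbf{u}=\hat{\mathbf{v}}$. Writing $\mathbf{u}=\mathbf{s}_n(i)$ and $\mathbf{v}=\mathbf{t}_n(j)$, the definition of conjugate gives $s_i=t_j\oplus1$ while $s_{i+k}=t_{j+k}$ for $k=1,\ldots,n-1$; that is, $\mathbf{u}$ and $\mathbf{v}$ differ only in their first entry and share a common tail of length $n-1$. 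This shared $(n-1)$-tail is what makes the two splice points seamless, and it is the engine of the whole argument.

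Next I would account for every length-$n$ window of $U$ according to its starting position, grouping them into three kinds. The windows lying wholly inside the initial run $s_0,\ldots,s_{i+n-1}$ reproduce, in order, the tuples $\mathbf{s}_n(0),\ldots,\mathbf{s}_n(i)$ of $S$; the windows lying wholly inside the inserted block $t_{j+n},\ldots,t_{m-1},t_0,\ldots,t_{j+n-1}$ (a full cyclic period of $T$) reproduce the tuples $\mathbf{t}_n(j+n),\ldots,\mathbf{t}_n(j)$ of $T$; and the windows lying wholly inside the final run $s_{i+n},\ldots,s_{\ell-1}$ (together with those wrapping round to $s_0,\ldots$, using the periodicity of $S$) reproduce the remaining tuples $\mathbf{s}_n(i+n),\ldots,\mathbf{s}_n(\ell-1)$ of $S$. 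The only windows not yet accounted for are the $2(n-1)$ that straddle the two junctions.

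The crux --- and the step I expect to be the main obstacle, purely because of the index bookkeeping --- is to evaluate these straddling windows. At the first junction a window reads $(s_{i+r},\ldots,s_{i+n-1},t_{j+n},\ldots,t_{j+n+r-1})$ for $r=1,\ldots,n-1$; substituting $s_{i+k}=t_{j+k}$ turns its $S$-part into $t_{j+r},\ldots,t_{j+n-1}$, so the window equals $\mathbf{t}_n(j+r)$, recovering exactly the tuples $\mathbf{t}_n(j+1),\ldots,\mathbf{t}_n(j+n-1)$ of $T$ missing from the count above. Symmetrically, the windows straddling the second junction equal $\mathbf{s}_n(i+1),\ldots,\mathbf{s}_n(i+n-1)$, recovering the missing tuples of $S$. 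It is here that the flipped first bit $s_i=t_j\oplus1$ earns its keep: it guarantees $\mathbf{u}\neq\mathbf{v}$, so both boundary tuples survive as genuinely distinct windows rather than colliding. Tallying up, every $\mathbf{s}_n(k)$ with $0\le k<\ell$ and every $\mathbf{t}_n(k)$ with $0\le k<m$ occurs exactly once among the $\ell+m$ windows of $U$.

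Finally I would invoke the hypotheses on $S$ and $T$: within $S$ all tuples are distinct and within $T$ all tuples are distinct (the window property of each), while no tuple of $S$ equals a tuple of $T$ (disjointness). Hence all $\ell+m$ windows of $U$ are pairwise distinct. This immediately shows $U$ is an $n$-window sequence, since no $n$-tuple can repeat within a period; and because the map sending a position to its window is injective on $\{0,\ldots,\ell+m-1\}$, no proper divisor of $\ell+m$ can be a period. Therefore $U$ has period exactly $\ell+m$, as claimed.
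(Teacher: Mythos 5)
Your argument is correct and complete. Note, though, that the paper itself gives no proof of this theorem: it is stated as a well-known result, with pointers to Theorem 2 of Lempel and Lemma 3 of Sawada et al., so there is no in-paper proof to compare against. Your direct verification is exactly the standard one: the splice has length $(i+n)+m+(\ell-i-n)=\ell+m$; the windows interior to each block reproduce windows of $S$ or of $T$; and the $2(n-1)$ junction-straddling windows, after substituting the shared tail $s_{i+k}=t_{j+k}$ ($1\le k\le n-1$) supplied by conjugacy, turn out to be $\mathbf{t}_n(j+1),\ldots,\mathbf{t}_n(j+n-1)$ and $\mathbf{s}_n(i+1),\ldots,\mathbf{s}_n(i+n-1)$ respectively, so the multiset of windows of the new cycle is precisely the union of those of $S$ and $T$; distinctness then follows from the window property of each sequence plus their disjointness, and full period $\ell+m$ follows from distinctness of the windows. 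One small interpretive quibble: you credit the flipped first bit $s_i=t_j\oplus1$ with ensuring $\mathbf{u}\neq\mathbf{v}$, but that is already forced by the disjointness hypothesis; the real content of conjugacy is that it is the only way two tuples from disjoint sequences can share the length-$(n-1)$ tail that makes the splice seamless. This does not affect the validity of your proof.
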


We also introduce a graph of fundamental importance to the study of $n$-window sequences.
\begin{definition}
The de Bruijn-Good graph $\mathcal{G}_n$ \cite{C40} is a directed graph with vertex set
$\mathbb{B}^n$, where for $\mathbf{u},\mathbf{v}\in\mathbb{B}^n$ (where
$\mathbf{u}=(u_0,u_1,\ldots,u_{n-1})$ and $\mathbf{v}=(v_0,v_1,\ldots,v_{n-1})$) there is a
directed edge $\mathbf{u}\rightarrow\mathbf{v}$ if and only if $u_{i+1}=v_i$, $0\leq i\leq n-1$.
\end{definition}
It should be clear that every vertex in $\mathcal{G}_n$ has two incoming edges and two outgoing
edges. It should also be clear that an $(n+1)$-window sequence defines a (directed) cycle in
$\mathcal{G}_n$, (where every $(n+1)$-tuple maps to an edge), and, using the same mapping, a de
Bruijn sequence of order $n+1$ is equivalent to an Eulerian cycle in $\mathcal{G}_n$.  This latter
remark immediately establishes the existence of de Bruijn sequences for every $n$ (given every node
has degree 2). It is also straightforward to see that a de Bruijn sequence of order $n$ defines an
Hamiltonian cycle in $\mathcal{G}_n$, this time using the rather more obvious mapping of $n$-tuples
to nodes.

\subsection{Orientable sequences}  \label{subsec-orientable}

The main focus of this paper is on $n$-window sequences with the property that an $n$-tuple cannot
occur twice within a period in either direction.  To this end we give the following definitions,
following Dai et al.\ \cite{Dai93}.

\begin{definition}
An $n$-window sequence $S=(s_i)$ of period $m$ is said to be an \emph{orientable sequence of order
$n$} (an $\mathcal{OS}(n)$) if, for any $i,j$, $\mathbf{s}_n(i)\not=\mathbf{s}_n(j)^R$.
\end{definition}

We also need the following related concept.

\begin{definition}
A pair of disjoint orientable sequences of order $n$, $S=(s_i)$ and $S'=(s'_i)$, are said to be
\emph{orientable-disjoint} (or simply \emph{o-disjoint}) if, for any $i,j$,
$\mathbf{s}_n(i)\not=\mathbf{s'}_n(j)^R$.
\end{definition}

As noted in Section~\ref{sec_intro}, Dai et al. \cite{Dai93} give an upper bound on the period of
orientable sequences.

\begin{theorem}[Dai et al. \cite{Dai93}]  \label{thm_Dai-bound}
Suppose $S$ is an $\mathcal{OS}(n)$ ($ n\geq 5$).  Then the period of $S$ is at most:
\begin{eqnarray*}
2^{n-1} - 41/9 \times 2^{n/2-1}+n/3+16/9 & \mbox{if} & n\equiv 0 \pmod 4 \\
2^{n-1} - 31/9 \times 2^{(n-1)/2}+n/3+19/9 & \mbox{if} & n\equiv 1 \pmod 4 \\
2^{n-1} - 41/9 \times 2^{n/2-1}+n/6+20/9 & \mbox{if} & n\equiv 2 \pmod 4 \\
2^{n-1} - 31/9 \times 2^{(n-1)/2}+n/6+43/18 & \mbox{if} & n\equiv 3 \pmod 4
\end{eqnarray*}
\end{theorem}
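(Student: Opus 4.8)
The plan is to bound the period $m$ by a counting argument carried out in the de Bruijn--Good graph, starting from a naive count and then refining it to capture the second-order losses responsible for the stated coefficients and the dependence on $n \bmod 4$.

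First I would fix the framework. Identify each $n$-tuple with an edge of $\mathcal{G}_{n-1}$ (equivalently, with a vertex of $\mathcal{G}_n$), so that an $\mathcal{OS}(n)$ of period $m$ is a closed cycle $C$ traversing $m$ distinct edges. Reversal $\mathbf{u}\mapsto\mathbf{u}^R$ is an involution $\rho$ on the edge set whose fixed points are exactly the symmetric $n$-tuples, of which there are $2^{\lceil n/2\rceil}$. The orientable condition says precisely that $C$ uses no symmetric edge (take $i=j$ in the definition) and that $C$ and $\rho(C)$ are edge-disjoint. Hence $C\cup\rho(C)$ consists of $2m$ distinct, non-symmetric edges, which gives the crude bound
\[ 2m \le 2^n - 2^{\lceil n/2\rceil}, \qquad\text{i.e.}\qquad m \le 2^{n-1} - 2^{\lceil n/2\rceil-1}. \]
Since $41/9$ and $31/9$ exceed $1$, the target inequalities are genuine refinements of this, so the remaining work is to locate the additional forbidden edges.

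Next I would sharpen the count by examining the local behaviour of $\rho$ near the symmetric \emph{vertices} of $\mathcal{G}_{n-1}$, i.e.\ the symmetric $(n-1)$-tuples $\mathbf{x}=\mathbf{x}^R$, of which there are $2^{\lceil(n-1)/2\rceil}$. Because $\rho$ carries an edge $\mathbf{x}\to\mathbf{y}$ to the edge $\mathbf{y}^R\to\mathbf{x}^R$, it reverses edge direction while reflecting endpoints, so at a symmetric vertex the two out-edges are sent to the two in-edges. A cycle passing through such a vertex therefore cannot make its two incident transitions freely: generically one of the four incident edges is either forced to be symmetric or forced to coincide with its own $\rho$-image, making it unavailable to $C\cup\rho(C)$ beyond the symmetric edges already removed. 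The task is to enumerate these forced losses together with those at the almost-symmetric configurations obtained by perturbing the centre of a palindrome.

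The key step, and the main obstacle, is to perform this enumeration exactly. A forbidden transition at a symmetric vertex can propagate to neighbouring near-symmetric vertices, so the excluded edges organise into chains whose contributions sum as a geometric-type series; this is, I expect, the source of the denominator $9$ and of the coefficients $41/9$ and $31/9$, while the lower-order terms $n/3$, $n/6$ and the constants arise from the chains that terminate near the palindrome centres. The exact tallies of symmetric and almost-symmetric tuples, and of how the chains close up, depend on whether the centre of a palindrome falls on a bit or between two bits and on the parity of $n-1$, which is precisely why the four residues of $n$ modulo $4$ must be treated separately. I expect the delicate, error-prone part to be the bookkeeping of which near-symmetric edges are forced out and the verification that the chains neither over- nor under-count; by contrast the overall strategy — reduce to edge-counting under $\rho$, subtract the symmetric edges, then subtract the forced losses near the fixed structures — is routine once the local analysis at symmetric vertices is pinned down.
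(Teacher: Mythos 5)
This theorem is not proved in the paper at all: it is quoted from Dai et al.\ \cite{Dai93} as an imported result, so there is no internal proof to compare yours against. Judged on its own terms, your proposal rigorously establishes only the first, easy step: since an $\mathcal{OS}(n)$ contains no symmetric $n$-tuple (take $i=j$ in the definition) and is o-disjoint from its reverse, one gets $2m\le 2^n-2^{\lceil n/2\rceil}$, i.e.\ $m\le 2^{n-1}-2^{\lceil n/2\rceil-1}$. That much is correct, but it is strictly weaker than the statement. Everything that actually constitutes the theorem --- the coefficients $41/9$ and $31/9$, the linear terms $n/3$ and $n/6$, the constants, and the case split on $n\bmod 4$ --- is left to an enumeration you explicitly do not carry out; phrases such as ``this is, I expect, the source of the denominator $9$'' are conjectures about where the numbers come from, not derivations of them. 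A proof of this bound must produce those exact quantities, and your sketch gives no mechanism that could be checked or corrected to yield them.

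Moreover, the one piece of local analysis you do offer does not work as claimed. At a symmetric vertex $\mathbf{x}$ of $\mathcal{G}_{n-1}$ the reversal involution sends the out-edge $(\mathbf{x},b)$ to the in-edge $(b,\mathbf{x})$. If the cycle $C$ passes through $\mathbf{x}$ entering on $(a,\mathbf{x})$ and leaving on $(\mathbf{x},b)$, orientability forces $a\ne b$ (otherwise $(a,\mathbf{x})$ and $(\mathbf{x},a)$ are mutually reversed $n$-tuples both occurring in $C$); but then $C\cup\rho(C)$ uses all four edges incident to $\mathbf{x}$, so no incident edge is ``forced out'' by this local argument, contrary to your assertion that one of the four becomes unavailable. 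The genuine losses in Dai et al.'s argument arise from a global analysis of how reversal-paired tuples distribute over cycles and how such cycles can be joined, which your sketch does not reach. As it stands, the proposal proves only the crude bound $m\le 2^{n-1}-2^{\lceil n/2\rceil-1}$ and not the theorem.
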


The values arising from Theorem~\ref{thm_Dai-bound} for $5\leq n\leq 9$ are given in
Table~\ref{Dai-table}.

\begin{table}[htb]
\caption{Bounding the period of an $\mathcal{OS}(n)$ (from Theorem~\ref{thm_Dai-bound})}
\label{Dai-table}
\begin{center}
\begin{tabular}{cc} \hline
Order ($n$) & Maximum period $m$ for an $\mathcal{OS}(n)$ \\ \hline
                  5     &      6 \\
                  6     &      17 \\
                  7     &      40 \\
                  8     &      96 \\
                  9     &      206 \\ \hline
\end{tabular}
\end{center}
\end{table}

Observe that the bound of Theorem~\ref{thm_Dai-bound} does not appear to be sharp for $n>5$; as
noted in the introduction, Sawada (see \url{http://debruijnsequence.org/db/orientable}) has shown
that the maximum periods of an OS(6) and OS(7) are 16 and 36 respectively.

\section{The Lempel homomorphism}\label{sec_Lempel}

\subsection{The homomorphism}

The construction method we introduce later in this paper is an application of the homomorphism $D$,
due to Lempel, \cite{Lempel70}.

\begin{definition}[Lempel \cite{Lempel70}]
The mapping $D:\mathbb{B}^n\rightarrow\mathbb{B}^{n-1}$ is as follows. If
$\mathbf{u}=(u_0,u_1,\ldots,u_{n-1})\in\mathbb{B}^n$ then
\[ D(\mathbf{u}) = (u_0\oplus u_1,u_1\oplus u_2,\ldots,u_{n-2}\oplus u_{n-1}) \in\mathbb{B}^{n-1}. \]
\end{definition}

The Lempel homomorphism has the following properties \cite{Lempel70}.
\begin{itemize}
\item $D$ is onto, i.e. $D(\mathbb{B}^n)=\mathbb{B}^{n-1}$ (\cite{Lempel70}, Lemma 1).
\item If $\mathbf{u},\mathbf{v}\in\mathbb{B}^n$ then $D(\mathbf{u})=D(\mathbf{v})$ if and only
    if $\mathbf{u}=\mathbf{v}$ or $\mathbf{u}=\bar{\mathbf{v}}$ (\cite{Lempel70}, Lemma 2).
\item $D$ is a graph homomorphism of $\mathcal{G}_n$ onto $\mathcal{G}_{n-1}$ (\cite{Lempel70},
    Theorem 4).
\end{itemize}


We extend the notation to allow $D$ to be applied to periodic binary sequences in the natural way.
That is, $D$ is a map from the set of periodic binary sequences to itself; the image of a sequence
of period $m$ will clearly have period dividing $m$.  (See also Etzion \cite{C98} for a discussion
of other properties of $D$).  In the natural way we can define $D^{-1}$ to be the `inverse' of $D$,
i.e.\ if $S$ is a periodic binary sequence than $D^{-1}(S)$ is the set of all binary sequences $T$
with the property that $D(T)=S$.

\subsection{Constructing de Bruijn sequences}

We next observe how the Lempel homomorphism can be used to construct a de Bruijn sequence of order
$n+1$ from a de Bruijn sequence of order $n$. Although these results are well-known, we briefly
give them here using our terminology, since they are of key importance for the orientable sequence
construction given below. We first need the following result.

\begin{theorem}\label{thm_window_homo}
Suppose $S=(s_i)$ is an $n$-window sequence of period $m$.  Then:
\begin{itemize}
\item if $w(S)$ is even then $D^{-1}(S)$ consists of a disjoint pair of complementary primitive
    $(n+1)$-window sequences of period $m$, and
\item if $w(S)$ is odd then $D^{-1}(S)$ consists of two different shifts of a single
    $(n+1)$-window sequence of period $2m$ and weight $m$.
\end{itemize}
\end{theorem}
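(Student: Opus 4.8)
The plan is to analyze the preimage of a single period of $S$ under $D$ by tracking how the "carry" bit propagates. The key observation is that $D(\mathbf{u})=D(\mathbf{v})$ exactly when $\mathbf{u}=\mathbf{v}$ or $\mathbf{u}=\bar{\mathbf{v}}$, so each value in $S$ has precisely two preimages that are complementary to one another. First I would fix a starting position and note that given $S=(s_i)$, any sequence $T=(t_i)$ with $D(T)=S$ is determined by a single choice of initial bit $t_0\in\{0,1\}$: once $t_0$ is fixed, the recurrence $t_{i+1}=t_i\oplus s_i$ forces every subsequent bit, since $s_i=t_i\oplus t_{i+1}$ is exactly the defining relation of $D$ applied termwise. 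Thus there are at most two candidate sequences $T$, one for each choice of $t_0$, and they are complementary to each other.

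Next I would compute the effect of advancing through one full period. Summing the recurrence gives $t_m = t_0 \oplus \sum_{i=0}^{m-1} s_i = t_0 \oplus w(S)$. This is where the parity of the weight becomes decisive. If $w(S)$ is even, then $t_m=t_0$, so $T$ closes up into a genuinely periodic sequence of period (dividing) $m$; the two choices of $t_0$ then yield two distinct complementary sequences, and I would argue each has period exactly $m$ (not a proper divisor) because its $D$-image $S$ has period $m$, and that they are disjoint and primitive. If $w(S)$ is odd, then $t_m = t_0\oplus 1 \neq t_0$, so after one period the sequence has flipped to its complement; it only closes up after traversing $S$ twice, giving a single sequence of period $2m$. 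In this odd case the "two" sequences obtained from the two choices of $t_0$ are merely shifts of one another by $m$ positions, explaining why $D^{-1}(S)$ is a single sequence appearing at two shifts.

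The remaining work is to verify the window property and the disjointness/primitivity claims. For the window property I would invoke that $D$ is a graph homomorphism of $\mathcal{G}_{n+1}$ onto $\mathcal{G}_n$: an $n$-window sequence corresponds to a cycle in $\mathcal{G}_{n-1}$ that is actually a simple cycle of edges, and lifting it through $D$ produces a cycle whose edges project injectively, so no $(n+1)$-tuple can repeat unless its image $n$-tuple repeats in $S$. Concretely, if $\mathbf{t}_{n+1}(a)=\mathbf{t}_{n+1}(b)$ then applying $D$ gives $\mathbf{s}_n(a)=\mathbf{s}_n(b)$, forcing $a\equiv b\pmod m$ by the window property of $S$; combined with the period analysis above this yields the stated periods. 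For the weight count in the odd case, one period of the length-$2m$ sequence contains each complementary pair of lifts once, so exactly $m$ of the $2m$ bits are $1$.

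I expect the main obstacle to be the disjointness and primitivity bookkeeping in the even case rather than the period computation itself. One must check both that the two complementary lifts share no $(n+1)$-tuple (disjointness) and that a single lift shares no $(n+1)$-tuple with its own complement (primitivity); these are closely related but require care, and the cleanest route is again through $D$: any shared or self-complementary $(n+1)$-tuple would, under $D$, collapse two distinct positions of $S$ to the same $n$-tuple, contradicting that $S$ is an $n$-window sequence. I would make sure the two senses of "complementary tuple agreeing under $D$" are handled uniformly via Lemma 2's dichotomy $\mathbf{u}=\mathbf{v}$ or $\mathbf{u}=\bar{\mathbf{v}}$.
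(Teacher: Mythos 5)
Your proposal is correct and follows essentially the same route as the paper: the recurrence $t_{i+1}=t_i\oplus s_i$, the parity of $w(S)$ deciding whether the lift closes after one or two traversals of a period, the $(n+1)$-window property pulled back through $D$, and the weight count via complementary tuples. One small caution on the disjointness/primitivity step: the contradiction is not that two \emph{distinct} positions of $S$ carry the same $n$-tuple (that case need not arise), but rather that the window property forces $i\equiv j\pmod m$, whence periodicity would make the $(n+1)$-tuple at position $i$ equal to its own complement --- which is exactly how the paper closes that case.
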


\begin{proof}
Suppose $T=(t_i)\in D^{-1}(S)$. It follows from the definition of $D$ that
\[ t_i=t_0\oplus \bigoplus_{j=0}^{i-1}s_j. \]
We consider the two cases separately.
\begin{itemize}
\item Suppose $w(S)$ is even. Then it follows immediately that $t_{m+i}=t_i$ for all $i$, i.e.\
    $D^{-1}(S)$ consists of a pair of sequences of period $m$. It also immediately follows that
    the two sequences are complementary. Next suppose that
    $\mathbf{t}_{n+1}(i)=\mathbf{t}_{n+1}(j)$ for some $i$,$j$.  Hence
    $D(\mathbf{t}_{n+1}(i))=D(\mathbf{t}_{n+1}(j))$, i.e., by definition of $D$ we know that
    $\mathbf{s}_{n}(i)=\mathbf{s}_{n}(j)$.  Since $S$ is an $n$-window sequence of period $m$,
    it follows that $i\equiv j\pmod m$, and hence $T$ is an ($n+1$)-window sequence.

    To establish primitivity, suppose the opposite, i.e.\ suppose
    $\mathbf{t}_{n+1}(i)=\bar{\mathbf{t}}_{n+1}(j)$ for some $i$,$j$. Then
    $D(\mathbf{t}_{n+1}(i))=D(\bar{\mathbf{t}}_{n+1}(j))$, i.e.\
    $\mathbf{s}_{n}(i)=\mathbf{s}_{n}(j)$.  As previously this implies $i\equiv j\pmod m$, but
    since we know $T$ has period $m$ this immediately gives a contradiction since we assumed
    $\mathbf{t}_{n+1}(i)\not=\mathbf{t}_{n+1}(j)$.

\item Now suppose $w(S)$ is odd. Then it follows immediately that $t_{m+i}=t_i\oplus 1$ for all
    $i$, and hence $t_{2m+i}=t_i$ for all $i$, i.e.\ $T$ has period $2m$.  The fact that
    $D^{-1}(S)$ contains two possible shifts of the same sequence follows by considering that
    $t_0$ can be either 0 or 1.  The fact that $T$ is an $(n+1)$-window sequence follows by
    precisely the same argument as for the even weight case. Finally, it has weight precisely
    half the period since if $\mathbf{u}$ is an $(n+1)$-tuple occurring in $T$, then
    $\bar{\mathbf{u}}$ also occurs in $T$. \qed

\end{itemize}
\end{proof}

We next give two simple examples of the operation of $D^{-1}$.

\begin{example}
First suppose $S=[101]$ (of even weight); then $D^{-1}(S)=\{ [011], [100] \}$.  Alternatively
suppose $S=[100]$ (of odd weight); then $D^{-1}(S)=\{ [100011] \}$.
\end{example}

Since the weight of a binary de Bruijn sequence of order $n>1$ is always even, the above theorem
immediately gives as a corollary the following result due to Lempel \cite{Lempel70}.

\begin{corollary}  \label{thm_Lempel_homo}
Suppose $S=(s_i)$ is a de Bruijn sequence of order $n>1$.  Then $D^{-1}(S)$ consists of a disjoint
    pair of complementary disjoint $(n+1)$-window sequences of period $2^n$.
\end{corollary}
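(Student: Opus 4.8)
The plan is to derive this corollary directly from Theorem~\ref{thm_window_homo}, which does almost all the work. The corollary is the special case of that theorem in which the $n$-window sequence $S$ is in fact a full de Bruijn sequence of order $n$. So the first thing I would do is verify that a de Bruijn sequence of order $n>1$ has even weight, so that I land in the first bullet of Theorem~\ref{thm_window_homo} rather than the second.

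To see that the weight is even: a de Bruijn sequence of order $n$ has period $2^n$ and contains every $n$-tuple in $\mathbb{B}^n$ exactly once. The weight $w(S)$ counts the number of positions $i$ with $s_i=1$, which equals the number of $n$-tuples beginning with a $1$. Since exactly half of the $2^n$ distinct $n$-tuples begin with a $1$, we get $w(S)=2^{n-1}$, which is even for all $n>1$. With the even-weight hypothesis verified, Theorem~\ref{thm_window_homo} immediately tells us that $D^{-1}(S)$ is a disjoint pair of complementary primitive $(n+1)$-window sequences, each of period $m=2^n$.

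The only remaining gap between what the theorem yields and what the corollary asserts is the word \emph{disjoint}: the theorem gives a \emph{complementary primitive} pair, whereas the corollary states a \emph{disjoint} pair. I would point out that primitivity is exactly the statement that an $n$-window sequence is disjoint from its complement (this is the definition given in Section~\ref{sec_terminology}), so the two complementary sequences produced by $D^{-1}$ are disjoint from each other precisely because each is primitive. Hence the terminology matches up and no further argument is needed.

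I do not anticipate a genuine obstacle here, since the result is a corollary and the substantive content has already been proved in Theorem~\ref{thm_window_homo}. The only point requiring care is the bookkeeping observation that de Bruijn sequences have even weight (so that we are in the correct case of the theorem), together with noting the period is $2^n$ rather than an arbitrary $m$. If anything, the mild subtlety is simply making explicit that ``primitive'' and ``disjoint from its complement'' are the same condition, so that the pair the theorem delivers is the disjoint pair the corollary claims.
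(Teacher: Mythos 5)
Your proposal is correct and follows exactly the paper's route: the paper derives the corollary from Theorem~\ref{thm_window_homo} by simply noting that a de Bruijn sequence of order $n>1$ has even weight, which you justify with the (correct) counting argument that $w(S)=2^{n-1}$ since exactly half of the $2^n$ $n$-tuples begin with a $1$. Your additional remark that ``primitive'' means ``disjoint from its complement'' is just unpacking the paper's definitions, so nothing is missing.
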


To complete the construction we need the following simple lemma (in essence given in a discussion
in \S IV.A of Lempel \cite{Lempel70}).

\begin{lemma}  \label{lemma_conjugate_pair}
Suppose $S=(s_i)$ is a de Bruijn sequence of order $n>1$, and let $D^{-1}(S)=\{T,T'\}$, where (from
Corollary~\ref{thm_Lempel_homo}) $T$ and $T'$ are a disjoint pair of complementary disjoint
$(n+1)$-window sequences of period $2^n$. Then $T$ and $T'$ contain conjugate $(n+1)$-tuples.
\end{lemma}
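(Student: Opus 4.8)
The plan is to recast the statement in terms of the de Bruijn--Good graph $\mathcal{G}_n$ and argue by contradiction. First I would note that, by Corollary~\ref{thm_Lempel_homo}, $T$ and $T'$ are disjoint $(n+1)$-window sequences of period $2^n$, so together they contain $2^n+2^n=2^{n+1}$ distinct $(n+1)$-tuples, i.e.\ every $(n+1)$-tuple exactly once. Interpreting $(n+1)$-tuples as edges of $\mathcal{G}_n$ (as noted after the definition of $\mathcal{G}_n$, an $(n+1)$-window sequence is a cycle in $\mathcal{G}_n$), this says that $T$ and $T'$ are two edge-disjoint cycles whose edges together partition the entire edge set of $\mathcal{G}_n$.

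The key translation is the following. The $(n+1)$-tuple $\mathbf{u}=(u_0,u_1,\ldots,u_n)$ corresponds to the edge from $(u_0,\ldots,u_{n-1})$ to $(u_1,\ldots,u_n)$, so its conjugate $\hat{\mathbf{u}}=(u_0\oplus1,u_1,\ldots,u_n)$ corresponds to the edge from $(u_0\oplus1,u_1,\ldots,u_{n-1})$ to the \emph{same} head vertex $(u_1,\ldots,u_n)$. Hence a conjugate pair of $(n+1)$-tuples is precisely the pair of two distinct incoming edges of a common vertex of $\mathcal{G}_n$. The lemma therefore becomes the assertion that some vertex of $\mathcal{G}_n$ has its two incoming edges split, one belonging to $T$ and the other to $T'$.

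I would then assume the contrary: that every vertex has both of its incoming edges in the same cycle, and define $g(v)\in\{T,T'\}$ to be that common cycle for each vertex $v$. The crucial local step is a flow-balance argument at a single vertex. If both incoming edges of $v$ lie in the cycle $g(v)$, then $g(v)$ arrives at $v$ twice and so must depart from $v$ twice; since $v$ has only two outgoing edges, both outgoing edges also lie in $g(v)$. Because each outgoing edge of $v$ is an incoming edge of one of its two out-neighbours, this forces $g$ to take the value $g(v)$ on both out-neighbours of $v$ as well.

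Finally, since $\mathcal{G}_n$ is strongly connected (any $n$-tuple can be shifted into any other, so every vertex is reachable from every other by following outgoing edges), propagating the previous step shows that $g$ is constant on all of $\mathbb{B}^n$. Thus every edge would lie in a single one of the two cycles, contradicting the fact that the other cycle is nonempty (it has period $2^n>0$). I expect the heart of the argument to be the local balance step showing that ``both incoming edges in the same cycle'' forces ``both outgoing edges in the same cycle,'' together with the verification that conjugate tuples are exactly the two incoming edges of a common vertex; the counting and strong-connectivity facts are routine.
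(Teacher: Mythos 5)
Your proof is correct, but it takes a genuinely different route from the paper's. The paper argues directly and constructively: it considers the two alternating $(n+1)$-tuples $(1010\ldots)$ and $(0101\ldots)$, notes that being complementary they must lie one in $T$ and one in $T'$, and then shows that the bit immediately preceding the occurrence of $(1010\ldots)$ in its sequence is forced (by disjointness of $T$ and $T'$) to be a $1$, so that the conjugate of $(0101\ldots)$ appears there --- exhibiting an explicit conjugate pair at a known location. You instead translate the statement into the de Bruijn--Good graph $\mathcal{G}_n$: conjugate $(n+1)$-tuples are exactly the two in-edges of a common vertex, and a flow-balance argument at each vertex plus strong connectivity shows that if no vertex had its in-edges split between $T$ and $T'$, one of the two cycles would absorb every edge, contradicting the fact that each has only $2^n$ of the $2^{n+1}$ edges. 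Both arguments are sound (your local step correctly handles the fact that a closed walk enters and leaves each vertex equally often, and your identification of conjugate pairs with shared-head edge pairs is right). The paper's proof is shorter and has the practical advantage of locating a specific conjugate pair, which matters for actually implementing the join of Theorem~\ref{thm_cycle_joining}; your proof is purely existential but is arguably more robust, since it uses only that $T$ and $T'$ are two edge-disjoint cycles partitioning the edge set of a strongly connected $2$-in-$2$-out digraph, rather than any special property of the alternating tuples.
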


\begin{proof}
Consider the two $(n+1)$-tuples consisting of alternating bits, i.e.
$\mathbf{u}=(1010\ldots)$ and $\mathbf{v}=(0101\ldots)$.  They are clearly complementary and so one
occurs in $T$ and the other in $T'$ --- they must both occur in one or other of $T$ and $T'$ since
by an obvious numerical argument $T$ and $T'$ between them contain all $(n+1)$-tuples.  Suppose,
without loss of generality, $\mathbf{u}$ occurs at position $i$ in $T=(t_i)$; then $t_{i-1}=1$
since if $t_{i-1}=1$ then $\mathbf{v}$ occurs at position $i-1$ in $T$, contradicting the
disjointness of $T$ and $T'$.  Hence the conjugate to $\mathbf{v}$ occurs at position $i-1$ in $T$,
giving the desired result. \qed
\end{proof}

It follows immediately that, combining Corollary~\ref{thm_Lempel_homo} with
Theorem~\ref{thm_cycle_joining} and Lemma~\ref{lemma_conjugate_pair}, it is simple to construct a
de Bruijn sequence of order $n+1$ from one of order $n$ by applying the inverse Lempel homomorphism
and then `joining' the two resulting sequences.

\section{Constructing orientable sequences}  \label{sec-orientable-construction}

\subsection{Applying the Lempel homomorphism}

We next show that a similar approach to that described above can be used to construct orientable
sequences of order $n+1$ from one of order $n$.

\begin{theorem}  \label{thm Lempel-orientable}
Suppose $S=(s_i)$ is an orientable sequence of order $n$ and period $m$.  Then:
\begin{itemize}
\item if $w(S)$ is even then $D^{-1}(S)$ consists of an o-disjoint pair of primitive
    complementary orientable sequences of order $n+1$ and period $m$, and
\item if $w(S)$ is odd then $D^{-1}(S)$ consists of two different shifts of a single orientable
    sequence of order $n+1$, period $2m$ and weight $m$.
\end{itemize}
\end{theorem}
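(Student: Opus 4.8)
The plan is to build directly on Theorem~\ref{thm_window_homo}. Since an orientable sequence of order $n$ is in particular an $n$-window sequence, that theorem already supplies every \emph{structural} conclusion we need --- the period, the weight, complementarity, primitivity, the two-shifts description in the odd case, and the fact that each preimage is an $(n+1)$-window sequence. The only genuinely new content is \emph{orientability} of the lifted sequences (in both cases) and \emph{o-disjointness} of the complementary pair (in the even case). So the real work is to show that the reversal-freeness of $S$ is inherited under $D^{-1}$, and everything else is quoted from Theorem~\ref{thm_window_homo}.

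The key observation I would establish first is that $D$ commutes with reversal: for any $(n+1)$-tuple $\mathbf{u}=(u_0,\ldots,u_n)$ one has $D(\mathbf{u}^R)=(D(\mathbf{u}))^R$. This is a one-line computation, since the $i$-th entry of $D(\mathbf{u}^R)$ is $u_{n-i}\oplus u_{n-i-1}$, which by commutativity of $\oplus$ equals $u_{n-1-i}\oplus u_{n-i}=(D(\mathbf{u}))_{n-1-i}$, the $i$-th entry of $(D(\mathbf{u}))^R$. I would also record the standard fact that a window of a preimage maps to the corresponding window of $S$: if $T=(t_i)\in D^{-1}(S)$ then $D(\mathbf{t}_{n+1}(i))=\mathbf{s}_n(i)$, which is immediate from $s_k=t_k\oplus t_{k+1}$.

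With these two facts in hand, both new claims follow by pulling an alleged reversal coincidence in a lift back down to $S$. For orientability of a lift $T=(t_i)$, suppose for contradiction that $\mathbf{t}_{n+1}(i)=\mathbf{t}_{n+1}(j)^R$ for some $i,j$. Applying $D$ and using the commuting identity gives $\mathbf{s}_n(i)=D(\mathbf{t}_{n+1}(i))=D(\mathbf{t}_{n+1}(j)^R)=(D(\mathbf{t}_{n+1}(j)))^R=\mathbf{s}_n(j)^R$, contradicting the orientability of $S$; hence each lift is an $\mathcal{OS}(n+1)$. The identical argument applied to the two distinct preimages $T,T'$ in the even-weight case shows that $\mathbf{t}_{n+1}(i)=\mathbf{t'}_{n+1}(j)^R$ would force $\mathbf{s}_n(i)=\mathbf{s}_n(j)^R$, again impossible; combined with the disjointness already supplied by Theorem~\ref{thm_window_homo}, this gives o-disjointness. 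The odd-weight case is closed by the same orientability argument applied to the single period-$2m$ lift.

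I expect no serious obstacle: once $D$ is seen to commute with reversal, every orientability/o-disjointness assertion reduces to the orientability of $S$ in exactly the way the window/disjointness assertions of Theorem~\ref{thm_window_homo} reduced to the window property of $S$. The one point needing a little care is index bookkeeping in the odd-weight case, where the lift has period $2m$ while $S$ has period $m$; but since the forbidden reversal coincidences are ruled out for \emph{all} index pairs (orientability of $S$ being quantified over all positions), reducing indices modulo $m$ on the $S$-side is harmless.
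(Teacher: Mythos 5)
Your proposal is correct and follows essentially the same route as the paper: quote Theorem~\ref{thm_window_homo} for all the structural facts, then rule out any reversal coincidence in a lift by applying $D$ and contradicting the orientability of $S$. The only difference is that you make explicit the identity $D(\mathbf{u}^R)=(D(\mathbf{u}))^R$, which the paper uses silently; this is a worthwhile clarification but not a different argument.
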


\begin{proof}
As previously we consider two cases.
\begin{itemize}
\item If $w(S)$ is even then Theorem~\ref{thm_window_homo} shows that $D^{-1}(S)$ consists of
    an disjoint pair of primitive complementary $(n+1)$-window sequences of order $n+1$ and
    period $m$.  It therefore remains to show that the two sequences are themselves orientable,
    and also that they are o-disjoint.

    First suppose that $T=(t_i)\in D^{-1}(S)$ is not orientable. i.e.
    $\mathbf{t}_{n+1}(i)=\mathbf{t}_{n+1}(j)^R$ for some $i$, $j$.  Then
    $D(\mathbf{t}_{n+1}(i))=D(\mathbf{t}_{n+1}(j)^R)$, i.e.\
    $\mathbf{s}_n(i)=\mathbf{s}_n(j)^R$, contradicting the assumption that $S$ is orientable.

    Next suppose that $T,T'\in D^{-1}(S)$ are not o-disjoint (where $T=(t_i)$ and $T'=(t'_i)$).
We know they are disjoint (from Theorem~\ref{thm_window_homo}), and hence it must hold that
$\mathbf{t}_{n+1}(i)=\mathbf{t}'_{n+1}(j)^R$.  Then
    $D(\mathbf{t}_{n+1}(i))=D(\mathbf{t}'_{n+1}(j)^R)$, i.e.\
    $\mathbf{s}_n(i)=\mathbf{s}_n(j)^R$, again contradicting the assumption that $S$ is
    orientable.

\item If $w(S)$ is odd then Theorem~\ref{thm_window_homo} again almost establishes the result.
    It remains to show that the $(n+1)$-window sequence of period $2m$ is orientable. However,
    this follows by precisely the same argument as used in the previous case. \qed
\end{itemize}
\end{proof}

Clearly this result is not enough on its own to enable construction of `long' orientable sequences,
since, even if $S$ has odd weight and period $m$, then $T\in D^{-1}(S)$ will have weight $m$, i.e.\
it will have odd weight if and only if $m$ is odd.  Moreover, even if $S$ has odd weight and odd
period, then $T\in D^{-1}(S)$ will have period $2m$, and hence $U\in D^{-1}(T)$ will have even
weight.  This is shown by the simple case in Example~\ref{example-n5to6}.

\begin{example}  \label{example-n5to6}
Let $S$ be the $\mathcal{OS}(5)$ of period $m=6$ with generating cycle $[001101]$, mentioned in
Section~\ref{subsec-orientable}.  $S$ has weight 3 (odd) and hence Theorem~\ref{thm
Lempel-orientable} tells us that $D^{-1}(S)$ contains a single $\mathcal{OS}(6)$ of period 12,
namely: $[000100111011]$. However, this has weight 6 (even) so that another application of $D^{-1}$
will yield a complementary pair of $\mathcal{OS}(7)$s of period 12, namely $[000011101001]$ and
$[111100010110]$.
\end{example}

To achieve `period doubling' for multiple iterations of the above construction, two `obvious'
possibilities present themselves:

\begin{itemize}
\item find a sequence $S$ with the property that when applying the construction method
    iteratively, the output complementary pair of sequences contains a conjugate pair of tuples
    (thereby enabling the two sequences to be joined to create a single `double length'
    sequence --- see Theorem~\ref{thm_cycle_joining});
\item find a sequence $S$ (with odd weight) with the property that it is possible to modify the
    double length output sequence $T\in D^{-1}(S)$ to ensure that it too has odd weight.
\end{itemize}
In the next section we exhibit an approach of the second type.

\subsection{An approach to maintaining odd weight}

We first define a method of `extending' orientable sequences of a special type.

\begin{definition}
Suppose $S=(s_i)$ is an orientable sequence of order $n$ and period $m$ with the property that
there is there is exactly one occurrence of $1^{n-4}$ in a period (and hence it contains no longer
runs of $1$s); suppose the generating cycle of $S$ is $[s_0,s_1,\ldots,s_{m-1}]$ where
$s_r=s_{r+1}=\cdots=s_{r+n-5}=1$ for some $r$. Define the function $\mathcal{E}$ (with domain and
range the set of periodic binary sequences) as follows. If $S$ has odd weight then set
$\mathcal{E}(S)=S$, and if $S$ has even weight then define $\mathcal{E}(S)$ to be the sequence with
generating cycle
 \[ [s_0,s_1,s_{r-1},1,s_r,s_{r+1},...s_{m-1}] \]
i.e.\ where the single occurrence of $1^{n-4}$ is replaced with $1^{n-3}$.
\end{definition}

\begin{remark}
Note that, as discussed in Dai et al.\ \cite{Dai93}, it is simple to see that any orientable
sequence can contain at most one occurrence of $1^{n-3}$ in a period.
\end{remark}

We can now state a key result.

\begin{lemma}  \label{lemma-E}
Suppose $S=(s_i)$ is an orientable sequence of order $n$ and period $m$ with the property that
there is exactly one occurrence of $1^{n-4}$ in a period. Then $\mathcal{E}(S)$ is an orientable
sequence of order $n$, period $m$ or $m+1$ (depending on whether $w(S)$ is odd or even) and odd
weight.
\end{lemma}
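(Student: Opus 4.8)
The plan is to follow the two cases in the definition of $\mathcal{E}$ separately, reducing every assertion about $\mathcal{E}(S)$ to a corresponding property of $S$. The odd-weight case is immediate: here $\mathcal{E}(S)=S$, which is by hypothesis an orientable sequence of order $n$, period $m$, and (odd) weight, so there is nothing to prove. So assume $w(S)$ is even and write $T=\mathcal{E}(S)$, the sequence obtained by lengthening the unique run $1^{n-4}$ to $1^{n-3}$.

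The weight and period are the easy part. Inserting a single $1$ gives $w(T)=w(S)+1$, which is odd, and a generating cycle of length $m+1$. To see that the period is \emph{exactly} $m+1$ I would note that, since $S$ contains exactly one occurrence of $1^{n-4}$ and no longer run, every run of $1$s in $S$ other than this one has length at most $n-5$; hence $T$ contains exactly one run of $1$s of length $n-3$, and a proper sub-period would force this run to recur. The substance of the lemma is therefore that $T$ is again orientable of order $n$, i.e.\ that no $n$-tuple of $T$ equals another $n$-tuple of $T$, or the reverse of one (including itself, taking $i=j$ in the definition of $\mathcal{OS}(n)$, so that $T$ has no symmetric $n$-tuple).

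For orientability my strategy is to split the $n$-tuples of $T$ into the \emph{special} ones, which contain the lengthened block $1^{n-3}$, and the rest. First I would check that there are exactly four special tuples, namely $\mathbf{t}_n(i)$ for the four starting positions whose window covers the whole run, and that in these four the block $1^{n-3}$ sits in four distinct positions; hence they are pairwise distinct and none is symmetric. Next I would observe that any non-special $n$-tuple of $T$ meets the run in at most $n-4$ places and therefore occurs in $S$ as well: outside the run $T$ and $S$ agree, and a window meeting the run in at most $n-4$ positions reads the same bits in $S$ when matched left- or right-aligned against the run. Since reversal preserves runs of $1$s, any repetition or reversal-coincidence involving a non-special tuple forces its partner to be non-special too; transporting both tuples back into $S$ then contradicts the orientability (or the window property) of $S$. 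This disposes of every collision except one between two special tuples.

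The main obstacle, and the only place where the value $n-4$ is essential, is excluding a reversal-coincidence between two special tuples. Comparing the position of $1^{n-3}$ shows the only candidates are pairs $\mathbf{t}_n(i)=\mathbf{t}_n(j)^R$ whose windows are reflections of one another across the run, and the most delicate of these reduces to a single scalar equality $t_a=t_b$ relating the bits flanking the run. I would then translate this equality, through the correspondence between the bits of $T$ and of $S$ on either side of the run, into the assertion that a particular $n$-tuple of $S$ of the shape $(s_{a'},0,1^{n-4},0,s_{b'})$ is symmetric; since an orientable sequence has no symmetric $n$-tuple, this is impossible, and the remaining candidate is excluded because one of its two defining equalities is precisely this forbidden one. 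Assembling these observations shows that $T$ is orientable of order $n$, completing the even-weight case.
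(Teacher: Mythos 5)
Your argument is correct and follows essentially the same route as the paper's proof: isolate the four $n$-tuples of $\mathcal{E}(S)$ that contain $1^{n-3}$, observe they cannot collide with any other tuple (or reversed tuple) nor with each other except via the two reflection pairs, and exclude those by showing the coincidence would force the $n$-tuple $(s_{r-2},0,1^{n-4},0,s_{r+n-3})$ of $S$ to be symmetric, contradicting orientability; the period and weight claims are handled identically. The only (cosmetic) difference is that you first exploit the flanking zeros to reduce the reflection check to the single equality $s_{r-2}=s_{r+n-3}$, whereas the paper argues directly that $u_1$ would be symmetric.
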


\begin{proof}
The result clearly holds if $S$ has odd weight, and we thus suppose $S$ has even weight.  The fact
that $\mathcal{E}(S)$ has odd weight follows immediately from the definition. Again by definition
the period of $\mathcal{E}(S)$ divides $m+1$, and is precisely $m+1$ because
$[s_0,s_1,s_{r-1},1,s_r,s_{r+1},...s_{m-1}]$ contains exactly one occurrence of $1^{n-3}$.

It remains to show that $\mathcal{E}(S)$ is an $\mathcal{OS}(n)$.  We only need to examine the
$n$-tuples which include the inserted $1$.  Inserting this single $1$ means that the following
three $n$-tuples that occur in $S$ (where the subscripts are computed modulo $m+1$):
\begin{eqnarray*}
u_0 & = & (s_{r-3}, s_{r-2}, s_{r-1}, 1^{n-4}, s_{r+n-4}), \\
u_1 & = & (s_{r-2}, s_{r-1}, 1^{n-4}, s_{r+n-4}, s_{r+n-3}), \\
u_2 & = & (s_{r-1}, 1^{n-4}, s_{r+n-4}, s_{r+n-3}, s_{r+n-2})
\end{eqnarray*}
are replaced in $\mathcal{E}(S)$ by the following four $n$-tuples:
\begin{eqnarray*}
v_0 & = & (s_{r-3}, s_{r-2}, s_{r-1}, 1^{n-3}), \\
v_1 & = & (s_{r-2}, s_{r-1}, 1^{n-3}, s_{r+n+4}), \\
v_2 & = & (s_{r-1}, 1^{n-3}, s_{r+n-4}, s_{r+n-3}), \\
v_3 & = & (1^{n-3}, s_{r+n-4}, s_{r+n-3}, s_{r+n-2}).
\end{eqnarray*}

Now all four of the $v_i$ tuples contain $1^{n-3}$, and hence they are all distinct and cannot
occur in $S$ (or $S^R$). The only remaining task is to show that $v_0\not=v_3^R$ and
$v_1\not=v_2^R$. However, if $v_0=v_3^R$ or $v_1=v_2^R$ then it immediately follows that $u_1$ is
symmetric, which contradicts the assumption that $S$ is orientable. \qed
\end{proof}

This then enables us to give a means to recursively generate `long' orientable sequences.  We first
define a special class of sequence.

\begin{definition}
An $\mathcal{OS}(n)$ with the property that there is exactly one occurrence of $0^{n-4}$ in a
period is said to be \emph{good}.
\end{definition}

\begin{theorem}
Suppose $S=(s_i)$ is a good $\mathcal{OS}(n)$ of odd weight and period $m$. If $T\in D^{-1}(S)$
then $\mathcal{E}(T)$ is a good $\mathcal{OS}(n+1)$ of odd weight, and period either $2m$ (if $m$
is odd) or $2m+1$ (if $m$ is even).
\end{theorem}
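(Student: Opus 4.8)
The plan is to produce $T$ from the odd-weight case of Theorem~\ref{thm Lempel-orientable}, verify that the hypotheses of Lemma~\ref{lemma-E} hold for $T$ viewed as a sequence of order $n+1$, apply that lemma, and finally check that the ``good'' property is preserved. Concretely, since $S$ is an $\mathcal{OS}(n)$ of odd weight and period $m$, Theorem~\ref{thm Lempel-orientable} tells us that each $T\in D^{-1}(S)$ is an orientable sequence of order $n+1$, period $2m$ and weight $m$. Lemma~\ref{lemma-E} (applied with order $n+1$) will then immediately give that $\mathcal{E}(T)$ is an $\mathcal{OS}(n+1)$ of odd weight and of period $2m$ or $2m+1$ according to the parity of $w(T)=m$ --- exactly the period claimed --- provided I first establish that $T$ contains \emph{exactly one} occurrence of $1^{(n+1)-4}=1^{n-3}$, which is precisely the hypothesis Lemma~\ref{lemma-E} requires. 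So two things remain: count the occurrences of $1^{n-3}$ (and of $0^{n-3}$) in $T$, and then show that $\mathcal{E}(T)$ is good.

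The crux is the translation between runs in $S$ and runs in $T$. Since $D(T)=S$ we have $s_i=t_i\oplus t_{i+1}$, so $s_i=0$ exactly when $t_i=t_{i+1}$; hence a maximal run of $k$ zeros in $S$ (flanked by $1$s) corresponds to a maximal run of $k+1$ equal bits in $T$, whereas runs of $1$s in $S$ correspond to alternating blocks in $T$ and create no constant runs. Because $S$ is good, its unique occurrence of $0^{n-4}$ is its unique maximal run of $0$s, of length exactly $n-4$, so $S$ contains no $0^{n-3}$ at all and every other run of $0$s has length at most $n-5$. Over the length-$2m$ period of $T$ the sequence $S$ appears twice, so this run occurs at two positions $r$ and $r+m$, each giving a constant run of length $n-3$ in $T$. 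Using the relation $t_{m+i}=t_i\oplus1$ from the odd-weight case of Theorem~\ref{thm_window_homo}, the two runs carry complementary values, so one is $1^{n-3}$ and the other is $0^{n-3}$; and since all remaining runs in $T$ have length at most $n-4$, these are the \emph{only} runs of length $\geq n-3$. Thus $T$ contains exactly one $1^{n-3}$ and exactly one $0^{n-3}$, which licenses the application of Lemma~\ref{lemma-E}.

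Finally I would verify goodness, i.e.\ that $\mathcal{E}(T)$ contains exactly one occurrence of $0^{(n+1)-4}=0^{n-3}$. If $m$ is odd then $w(T)=m$ is odd, $\mathcal{E}(T)=T$, and we have already shown $T$ has exactly one $0^{n-3}$ and period $2m$. If $m$ is even then $\mathcal{E}$ inserts a single $1$ into the unique $1^{n-3}$ block, lengthening it to $1^{n-2}$ and giving period $2m+1$; this insertion adds no new $0$s and does not touch the unique $0^{n-3}$ block, so $\mathcal{E}(T)$ still has exactly one occurrence of $0^{n-3}$. In both cases $\mathcal{E}(T)$ is good, completing the argument.

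I expect the main obstacle to be the run-counting step: making precise the correspondence between maximal runs of $0$s in $S$ and maximal constant runs in $T$, and confirming that goodness of $S$ forces both a unique $1^{n-3}$ (needed to invoke $\mathcal{E}$ and Lemma~\ref{lemma-E}) and a unique $0^{n-3}$ (needed for goodness of the output) while excluding any run of length $n-2$ or more. Once that bookkeeping is in place, the orientability, odd-weight and period statements follow formally from Theorem~\ref{thm Lempel-orientable} and Lemma~\ref{lemma-E}.
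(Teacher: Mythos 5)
Your proposal is correct and follows essentially the same route as the paper: apply Theorem~\ref{thm Lempel-orientable} to get $T$, count the occurrences of $1^{n-3}$ and $0^{n-3}$ in $T$, invoke Lemma~\ref{lemma-E}, and note that $\mathcal{E}$ cannot change the number of occurrences of $0^{n-3}$. Your run-length bookkeeping is just a more detailed rendering of the paper's one-line observation that $D^{-1}(0^{n-4})=\{0^{n-3},1^{n-3}\}$, so the unique $0^{n-4}$ in $S$ lifts to exactly one $0^{n-3}$ and one $1^{n-3}$ per period of $T$.
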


\begin{proof}
The fact that $T$ is an $\mathcal{OS}(n+1)$ of period $2m$ follows immediately from
Theorem~\ref{thm Lempel-orientable}; we also know the weight of $T$ is $m$.  Since $0^{n-4}$ occurs
exactly once in $S$, both $0^{n-3}$ and $1^{n-3}$ occur exactly once in $T$, as
$D^{-1}(0^{n-4})=\{0^{n-3},1^{n-3}\}$. This means that $T$ is good and also the conditions of
Lemma~\ref{lemma-E} apply.  This in turn means that $\mathcal{E}(T)$ is an orientable sequence of
order $n+1$ and odd weight.  The fact that $\mathcal{E}(T)$ is good follows from observing that
applying $\mathcal{E}$ cannot affect the number of occurrences of $0^{n-3}$.  Finally,
$\mathcal{E}(T)$ has period either $2m$ (if $m$ is odd) or $2m+1$ (if $m$ is even), since $T$ has
weight $m$. \qed
\end{proof}

This immediately gives the following result.

\begin{corollary}  \label{Cor-length1}
Suppose $S_n$ is a good $\mathcal{OS}(n)$ of period $m_n$. Recursively define the sequences
$S_{i+1}=\mathcal{E}(D^{-1}(S_i))$ for $i\geq n$, and suppose $S_i$ has period $m_i$ ($i>n$). Then,
$S_i$ is a good $\mathcal{OS}(i)$ for every $i$, and for every $j\geq 0$:
\begin{itemize}
\item if $m_n$ is odd, $m_{n+2j+1}=2m_{n+2j}$ and $m_{n+2j+2}=2m_{n+2j+1}+1$;
\item if $m_n$ is even, $m_{n+2j+1}=2m_{n+2j}+1$ and $m_{n+2j+2}=2m_{n+2j+1}$.
\end{itemize}
\end{corollary}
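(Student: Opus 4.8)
The plan is to prove Corollary~\ref{Cor-length1} by induction on $i$, using the immediately preceding theorem as the engine that drives each step of the recursion. The theorem states precisely that if $S$ is a good $\mathcal{OS}(n)$ of odd weight and period $m$, then $\mathcal{E}(D^{-1}(S))$ is a good $\mathcal{OS}(n+1)$ of odd weight with period $2m$ (if $m$ is odd) or $2m+1$ (if $m$ is even). So the corollary is essentially a matter of unwinding this recurrence and tracking the parity of the period as it propagates. First I would verify that the hypotheses of the theorem are maintained at every stage: the theorem's output is a good $\mathcal{OS}$ of \emph{odd} weight, which is exactly the hypothesis the theorem requires of its input, so the induction closes cleanly with respect to the `good, odd weight, orientable' properties. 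This gives the claim that $S_i$ is a good $\mathcal{OS}(i)$ for every $i\geq n$.

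The main content is then the period arithmetic, which I would handle by tracking the parity of $m_i$ as it evolves. The key observation is that the period recurrence is parity-dependent: applying the theorem to a sequence of \emph{odd} period $m$ yields period $2m$ (even), whereas applying it to a sequence of \emph{even} period $m$ yields period $2m+1$ (odd). Thus the parity of the period strictly alternates at each step --- an odd period always becomes even, and an even period always becomes odd. Starting from $m_n$, the parities of $m_n, m_{n+1}, m_{n+2}, \ldots$ therefore cycle with period $2$: they are odd, even, odd, even, \ldots\ if $m_n$ is odd, and even, odd, even, odd, \ldots\ if $m_n$ is even.

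With the parity pattern established, I would read off the two cases stated in the corollary. If $m_n$ is odd, then $m_{n+2j}$ is odd for every $j\geq 0$, so the theorem gives $m_{n+2j+1}=2m_{n+2j}$; and $m_{n+2j+1}$ is then even, so $m_{n+2j+2}=2m_{n+2j+1}+1$. If instead $m_n$ is even, then $m_{n+2j}$ is even, giving $m_{n+2j+1}=2m_{n+2j}+1$, and $m_{n+2j+1}$ is odd, giving $m_{n+2j+2}=2m_{n+2j+1}$. These are exactly the two bulleted formulas in the statement, so the proof concludes by matching each formula against the correct parity case.

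I do not anticipate a genuine obstacle here, since all the hard work --- establishing that $\mathcal{E}(D^{-1}(S))$ really is good, orientable, and of odd weight, and computing its exact period --- has already been discharged in the preceding theorem (which in turn rests on Lemma~\ref{lemma-E} and Theorem~\ref{thm Lempel-orientable}). The only point requiring a little care is the bookkeeping of parities: one must confirm that the two subcases of the period recurrence are never in conflict, i.e.\ that the parity genuinely alternates so that the index $n+2j$ always lands on the starting parity and $n+2j+1$ always lands on the opposite parity. This is an immediate consequence of the fact that $2m$ is always even and $2m+1$ is always odd, so the verification amounts to a short parity argument rather than any substantive new idea.
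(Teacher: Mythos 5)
Your proof is correct and follows essentially the same route as the paper's: both rely on the preceding theorem to drive the recursion and track the parity of the period, observing that an odd period $m$ produces $2m$ (even) while an even period produces $2m+1$ (odd), so the parities alternate and the two bulleted formulas follow by reading off the cases.
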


\begin{proof}
If $m_i$ is odd for any $i\geq m$, then $D^{-1}(S_i)$ will have odd weight and hence
$S_{i+1}=D^{-1}(S_i)$; that is, $S_{i+1}$ will have even period ($2m_i$).  By similar reasoning,
$S_{i+2}$ will have odd period ($2m_{i+1}+1=4m_i+1$).  This immediately yields the result. \qed
\end{proof}

Simple numerical calculations give the following.

\begin{corollary}
Suppose the sequences $(S_i)$ are defined as in Corollary~\ref{Cor-length1}.  Then
\begin{itemize}
\item if $m_n$ is odd, $m_{n+2j}=2^{2j}m_n+(2^{2j}-1)/3$ and
    $m_{n+2j+1}=2^{2j+1}m_n+(2^{2j+1}-2)/3$;
\item if $m_n$ is even, $m_{n+2j}=2^{2j}m_n+(2^{2j+1}-2)/3$ and
    $m_{n+2j+1}=2^{2j+1}m_n+(2^{2j+2}-1)/3$.
\end{itemize}
\end{corollary}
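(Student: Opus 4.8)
The plan is to solve, in closed form, the two first-order linear recurrences that Corollary~\ref{Cor-length1} already supplies. The key observation is that each single-step relation there is only a ``half-step''; composing two consecutive half-steps yields a clean recurrence relating $m_{n+2(j+1)}$ to $m_{n+2j}$, which I can then solve by a standard geometric sum (equivalently, by induction on $j$). I would present the two parity cases separately, exactly as in the statement, since Corollary~\ref{Cor-length1} fixes which pair of half-step relations applies once the parity of $m_n$ is known.

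First I would treat the case where $m_n$ is odd. Writing $a_j = m_{n+2j}$ and substituting $m_{n+2j+1}=2m_{n+2j}$ into $m_{n+2j+2}=2m_{n+2j+1}+1$ gives
\[ a_{j+1} = 4a_j + 1, \qquad a_0 = m_n. \]
Unrolling this yields $a_j = 4^j m_n + \sum_{k=0}^{j-1}4^k = 2^{2j}m_n + (2^{2j}-1)/3$, which is the claimed expression for $m_{n+2j}$. The formula for $m_{n+2j+1}$ then follows by the single substitution $m_{n+2j+1}=2a_j = 2^{2j+1}m_n + (2^{2j+1}-2)/3$.

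The case where $m_n$ is even is identical in structure, with the roles of the two half-steps swapped. Setting $b_j = m_{n+2j}$ and substituting $m_{n+2j+1}=2m_{n+2j}+1$ into $m_{n+2j+2}=2m_{n+2j+1}$ produces
\[ b_{j+1} = 4b_j + 2, \qquad b_0 = m_n, \]
whose solution is $b_j = 2^{2j}m_n + (2^{2j+1}-2)/3$; one further substitution gives $m_{n+2j+1}=2b_j+1 = 2^{2j+1}m_n + (2^{2j+2}-1)/3$ after collecting the constant term.

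Since both recurrences are elementary, I expect no substantive obstacle — this is presumably why the authors preface the statement with ``simple numerical calculations.'' The only points requiring care are bookkeeping ones: reading off the correct pair of half-step relations from Corollary~\ref{Cor-length1} according to the parity of $m_n$ (both relations in a pair hold for every $j$, so the composition is unambiguous), checking the base case $j=0$ against the closed forms, and confirming that constants such as $(2^{2j}-1)/3$ are genuinely integers, which holds because $4^k\equiv 1\pmod 3$ for all $k\geq 0$.
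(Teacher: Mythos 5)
Your proposal is correct, and it matches the paper's (implicit) argument: the paper offers no written proof beyond the phrase ``simple numerical calculations,'' and your composition of the two half-step recurrences from Corollary~\ref{Cor-length1} into $a_{j+1}=4a_j+1$ (odd case) and $b_{j+1}=4b_j+2$ (even case), followed by a geometric-sum unrolling, is exactly the calculation intended. All four closed forms check out, including the integrality of the constants.
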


We conclude by giving a simple example of how the above process can be used to generate an infinite
family of orientable sequences.

\begin{example}  \label{periodic-example}
$[001010111]$ is the generating cycle of an $\mathcal{OS}(6)$ of period 9, which is good since it
contains exactly one instance of $0^2$.  It also has odd weight.  So it can be used as $S_6$ for
the first application of $D^{-1}$.  This results in a good $\mathcal{OS}(7)$ of period 18 with
generating cycle: $[000110010 111001101]$.  This has weight 9 (which is odd), i.e.\
$\mathcal{E}(D^{-1}(S_6))=D^{-1}(S_6)$, and so $S_7=[000110010 111001101]$.  We next have
\[  D^{-1}(S_7) = [000010001 101000100 111101110 010111011] \]
which has even weight and hence we need to insert an extra 1 after the unique sequence of four $1$s,
i.e.\
\[ S_8 = [000010001 101000100 1111101110 010111011]. \]
Continuing in this way we obtain sequences with the periods listed in Table~\ref{new-table}.
\end{example}
\begin{table}[htb]
\caption{A family of orientable sequences}  \label{new-table}
\begin{center}
\begin{tabular}{ccc} \hline
Order ($n$) & Period ($m_n$) & Remarks \\ \hline
                  6     &      9  & \\
                  7     &      18 & \mbox{No need to insert an extra 1 as the weight}\\
                        &         & \mbox{is odd since [001010111] has odd period} \\
                  8     &      37 & \mbox{After adding the extra 1} \\
                  9     &      74 & \mbox{No need to insert an extra 1} \\
                  10     &     149 & \\
                  $\cdots$  & $\cdots$ \\
                  $6+2j$ & $9(2^{2j})+(2^{2j}-1)/3$ \\
                  $6+2j+1$ & $9(2^{2j+1})+(2^{2j+1}-2)/3$ \\
  \hline
\end{tabular}
\end{center}
\end{table}

For the sequences in Example~\ref{periodic-example} we thus have $m_n>9\times 2^{n-6}$ and from Dai
et al.\ \cite{Dai93} we know that $m_n<2^{n-1}$ for any orientable sequence of order $n$.  That is,
even for this simple example, the sequences obtained have periods at least 9/32 of the optimal
values. Clearly sequences with periods closer to the optimal values can be obtained if the `starter
sequence' has period larger than the value given in the table\footnote{An example of a good
$\mathcal{OS}(8)$ of period 80 has been found by Sawada --- see
\url{http://debruijnsequence.org/db/orientable} --- yielding sequences of length at least 63\% of
the longest possible such sequence.}, although the generated sequences will never by asymptotically
optimal in length (unlike the sequences of Dai et al.\ \cite{Dai93}). The sequence of period 9 was
found by hand, and in the absence of a systematic search it is not clear whether a good
$\mathcal{OS}(6)$ of period greater than 9 exists.

\section{The aperiodic case}  \label{sec-aperiodic-orientable-construction}

\subsection{Introduction and definitions}  \label{subsec-aperiodic-intro}

Up to this point we have only considered \emph{periodic} sequences, i.e.\ infinite binary sequences
which repeat after a finite period.  However, many of the ideas we have thus far discussed also
apply to the \emph{aperiodic} case, i.e. where we are dealing with a single finite sequence.  If
$S=(s_0,s_1,\ldots,s_{\ell-1})$ is a binary sequence of length $\ell$, i.e.\ $s_i\in\mathbb{B}$ for
$0\leq i<\ell$, then $S$ is an aperiodic orientable sequence of order $n$ (an $\mathcal{AOS}(n)$)
if and only if the collection of $2\ell-2n+2$ $n$-tuples $\mathbf{s}_n(i)$ and $\mathbf{s}_n(i)^R$ ($0\leq
i\leq \ell-n$) are all distinct.

As noted in Section~\ref{sec_intro}, sequences of both periodic and aperiodic type have potential
applications in position-location applications where the sequence is encoded onto a surface which
may be read in either direction, and reading $n$ digits reveals the location of the reader and the
direction of travel. Whether the periodic or aperiodic sequences are more appropriate depends on
whether the surface on which the sequence is encoded forms a closed loop, e.g.\ when the surface is
a cylinder, or not.

This case was briefly considered by Burns and Mitchell \cite{Burns93}, who give some simple results
on the lengths of the longest such sequences, obtained from computer searches --- see
Table~\ref{Burns-table}, where it is claimed that the for $4\leq n\leq 7$ the length given is the
length of the longest such sequence. Note that such sequences are referred to there as binary
aperiodic 2-orientable window sequences.

\begin{table}[htb]
\caption{Existence of aperiodic orientable sequences}  \label{Burns-table}
\begin{center}
\begin{tabular}{cc} \hline
Order ($n$) & Sequence length ($\ell$) \\ \hline
                  4     &      8 \\
                  5     &      14 \\
                  6     &      26 \\
                  7     &      48 \\
                  8     &      108 \\
                  9     &      210 \\
                  10    &      440 \\
                  11    &      872 \\
                  12    &      1860 \\
                  13    &      3710 \\
                  14    &      7400 \\
                  15    &      15467 \\
                  16    &      31766 \\ \hline
\end{tabular}
\end{center}
\end{table}

It follows immediately from the definitions that if $S=(s_i)$ is an $\mathcal{OS}(n)$ of period
$m$, then $(s_0,s_1\ldots,s_{n+m-2})$ is an $\mathcal{AOS}(n)$ of length $m+n-1$ (see also
\cite{Burns93}).

Finally, analogously to the periodic case, we define:
\begin{itemize}
\item a pair of $\mathcal{AOS}(n)$s to be \emph{disjoint} if they do not share an $n$-tuple;
\item a pair of $\mathcal{AOS}(n)$s to be \emph{o-disjoint} if they do not share an $n$-tuple
    in either direction; and
\item an aperiodic sequence to be \emph{primitive} if it is disjoint from its complement.
\end{itemize}

\subsection{Applying the Lempel homomorphism}

The Lempel homomorphism applies equally in the aperiodic case, and the following result analogous
to Theorem~\ref{thm Lempel-orientable} holds.  As the operator $D$ acts in the same way on $(n+1)$-windows in the aperiodic case as the periodic case the proof follows using precisely the same arguments
as in Theorems~\ref{thm_window_homo} and \ref{thm Lempel-orientable}.

\begin{theorem}  \label{thm Lempel-orientable-aperiodic}
Suppose $S=(s_i)$ is an $\mathcal{AOS}(n)$ of length $\ell$.  Then $D^{-1}(S)$ consists of an
o-disjoint pair of primitive complementary $\mathcal{AOS}(n+1)$s of length $\ell+1$.
\end{theorem}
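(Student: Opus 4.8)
The plan is to mirror, in the aperiodic setting, the arguments of Theorems~\ref{thm_window_homo} and \ref{thm Lempel-orientable}, the only real novelty being that the weight-based case split disappears. First I would pin down the structure of $D^{-1}(S)$. Writing $S=(s_0,\ldots,s_{\ell-1})$, any $T=(t_i)$ with $D(T)=S$ must satisfy $t_{i+1}=t_i\oplus s_i$, so $T$ is determined entirely by the single free bit $t_0$ and has length $\ell+1$. Hence $D^{-1}(S)=\{T,T'\}$ consists of exactly two sequences of length $\ell+1$, and taking $t_0=0$ versus $t_0=1$ shows immediately that $T'=\bar T$, i.e.\ the two are complementary. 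Note that, unlike the periodic case of Theorem~\ref{thm_window_homo}, there is no parity condition to check: with no period to respect, $t_0$ is genuinely free, so we always obtain a complementary pair and never a single doubled sequence.

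Next I would record the two elementary properties of $D$ that drive everything. Since differencing an $(n+1)$-tuple of $T$ at position $i$ yields exactly the $n$-tuple of $S$ at position $i$, we have $D(\mathbf{t}_{n+1}(i))=\mathbf{s}_n(i)$ for $0\le i\le \ell-n$, the admissible range being identical for the $(n+1)$-tuples of $T$ and the $n$-tuples of $S$; and a direct computation gives $D(\mathbf{u}^R)=D(\mathbf{u})^R$, i.e.\ $D$ commutes with reversal. With these in hand, the window and orientability properties of a single $T\in D^{-1}(S)$ follow by pullback. If $\mathbf{t}_{n+1}(i)=\mathbf{t}_{n+1}(j)$ then applying $D$ gives $\mathbf{s}_n(i)=\mathbf{s}_n(j)$, forcing $i=j$ since $S$ is an $n$-window sequence; and if $\mathbf{t}_{n+1}(i)=\mathbf{t}_{n+1}(j)^R$ then applying $D$ and using commutation with reversal gives $\mathbf{s}_n(i)=\mathbf{s}_n(j)^R$, contradicting orientability of $S$. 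Thus each $T$ is an $\mathcal{AOS}(n+1)$.

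Finally I would treat the relationship between $T$ and $T'=\bar T$, using $D(T)=D(T')=S$ (recall $D(\bar{\mathbf{v}})=D(\mathbf{v})$). For o-disjointness, a shared reversed tuple $\mathbf{t}_{n+1}(i)=\mathbf{t}'_{n+1}(j)^R$ would, on applying $D$, yield $\mathbf{s}_n(i)=\mathbf{s}_n(j)^R$, again contradicting orientability of $S$. For primitivity, a coincidence $\mathbf{t}_{n+1}(i)=\bar{\mathbf{t}}_{n+1}(j)=\mathbf{t}'_{n+1}(j)$ would give $\mathbf{s}_n(i)=\mathbf{s}_n(j)$, hence $i=j$, whence $\mathbf{t}_{n+1}(i)=\bar{\mathbf{t}}_{n+1}(i)$, an impossibility; so $T$ is disjoint from its complement. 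This also re-establishes that $T$ and $T'$ are disjoint, completing the claim that $D^{-1}(S)$ is an o-disjoint pair of primitive complementary $\mathcal{AOS}(n+1)$s of length $\ell+1$.

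I would expect no genuine obstacle here; the content is the same pullback trick as in the periodic theorems. The one point demanding care is bookkeeping of index ranges --- because in the aperiodic case there is no wrap-around, I must confirm that every $(n+1)$-tuple of $T$ sits at a position $0\le i\le\ell-n$ whose image under $D$ is a legitimate $n$-tuple of $S$, so that no collision argument silently invokes a nonexistent tuple. The substantive ingredient, reused throughout, is simply that $D$ intertwines reversal, $D(\mathbf{u}^R)=D(\mathbf{u})^R$.
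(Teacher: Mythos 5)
Your proof is correct and follows essentially the same route as the paper, which simply asserts that the arguments of Theorems~\ref{thm_window_homo} and \ref{thm Lempel-orientable} carry over to the aperiodic setting; you have filled in those details accurately, including the key facts that $D$ commutes with reversal and that the absence of wrap-around removes the parity case split. No gaps.
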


Of course, this result does not enable us to generate long aperiodic orientable sequences.  As in
the periodic case, we need to find a way to combine the pair of sequences output from the inverse
Lempel homomorphism.  Fortunately, as we show below, if you start the iterative process with a
sequence with very special properties then this can be achieved.

\subsection{A special case}

We start by considering a very special type of $\mathcal{AOS}(n)$.  We first need the following.

\begin{definition}
If $S=(s_i)$ is an $\mathcal{AOS}(n)$, $n>1$, of length $\ell$ with the property that the first
$n-1$ bits are $0$s and the last $n-1$ bits are $1$s, i.e. $S=0^{n-1}\cdots 1^{n-1}$, then we say
$S$ is \emph{ideal}.
\end{definition}

We can now state the following simple result, which follows immediately from Theorem~\ref{thm
Lempel-orientable-aperiodic} and the definition of $D$.

\begin{lemma} \label{lemma-iterate-aperiodic}
Suppose $S=(s_i)$ is an ideal $\mathcal{AOS}(n)$.  Then:
\[ D^{-1}(S)=\{ (0^{n}\cdots a_{n}), (1^{n}\cdots \bar{a}_{n}) \} \]
where $a_{n}$ is a subsequence consisting of $n$ alternating bits (whether it starts with a $0$ or
a $1$ is immaterial).
\end{lemma}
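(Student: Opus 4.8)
The plan is to compute the preimage $D^{-1}(S)$ directly from the defining relation of $D$ and then simply read off the first and last $n$ bits of each preimage. Write $S=(s_0,\ldots,s_{\ell-1})$ and $T=(t_0,\ldots,t_\ell)$ for a member of $D^{-1}(S)$ (which has length $\ell+1$ by Theorem~\ref{thm Lempel-orientable-aperiodic}). The equation $D(T)=S$ says exactly that $t_{i+1}=t_i\oplus s_i$ for $0\le i\le \ell-1$, equivalently $t_i=t_0\oplus\bigoplus_{j=0}^{i-1}s_j$. This is the same telescoping identity already exploited in the proof of Theorem~\ref{thm_window_homo}, and it shows that $T$ is completely determined by the single free bit $t_0$.

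First I would treat the leading block. Since $S$ is ideal, $s_0=\cdots=s_{n-2}=0$, so the partial sums $\bigoplus_{j=0}^{i-1}s_j$ vanish for $1\le i\le n-1$; hence $t_0=t_1=\cdots=t_{n-1}$, and the first $n$ bits of $T$ are all equal to $t_0$, i.e.\ $0^n$ when $t_0=0$ and $1^n$ when $t_0=1$. Next I would treat the trailing block. The last $n-1$ bits of $S$ are the entries $s_{\ell-n+1},\ldots,s_{\ell-1}$, all equal to $1$, so consecutive bits among $t_{\ell-n+1},t_{\ell-n+2},\ldots,t_\ell$ differ: $t_{i+1}=t_i\oplus1$ for $\ell-n+1\le i\le\ell-1$. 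These $n$ bits therefore alternate, which is precisely the tail $a_n$ of the statement.

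To finish, I would invoke Theorem~\ref{thm Lempel-orientable-aperiodic}, which asserts that $D^{-1}(S)$ is a complementary pair; this matches the two admissible choices $t_0=0$ and $t_0=1$ found above, so there are exactly two preimages, one beginning $0^n$ and the other $1^n$. Since complementation negates every bit, it simultaneously swaps the leading runs and sends the alternating tail $a_n$ to its bitwise complement $\bar a_n$ (still alternating, merely started from the opposite bit). This yields precisely $D^{-1}(S)=\{(0^n\cdots a_n),(1^n\cdots\bar a_n)\}$, and also explains why whether $a_n$ starts with $0$ or $1$ is immaterial: the two choices correspond to the two complementary members and are not independently selectable.

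I do not expect a genuine obstacle, as the whole argument is a direct substitution into the telescoping formula. The only points requiring a little care are the index bookkeeping, namely confirming that the $n-1$ unit differences $s_{\ell-n+1},\ldots,s_{\ell-1}$ produce exactly $n$ alternating tail entries, and the extremal case $\ell=2n-2$ (ideal sequences force $\ell\ge 2n-2$, the leading $0^{n-1}$ and trailing $1^{n-1}$ being disjoint). In that boundary case the leading and trailing length-$n$ blocks share the single position $n-1$; this is harmless, since $t_{n-1}=t_0$ by the first step and simultaneously serves as the first entry of the alternating tail, so the two descriptions of $T$ agree there.
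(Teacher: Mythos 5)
Your proposal is correct and matches the paper's intent: the paper gives no explicit proof, stating only that the lemma ``follows immediately from Theorem~\ref{thm Lempel-orientable-aperiodic} and the definition of $D$,'' and your telescoping computation $t_i=t_0\oplus\bigoplus_{j=0}^{i-1}s_j$ applied to the leading $0^{n-1}$ and trailing $1^{n-1}$ blocks is exactly the omitted verification. The extra care you take with the boundary case $\ell=2n-2$ and with why the starting bit of $a_n$ is immaterial goes slightly beyond what the paper records, but it is the same argument.
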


This then leads to the following main result.

\begin{theorem}  \label{theorem-iterate-aperiodic}
Suppose $S=(s_i)$ is an ideal $\mathcal{AOS}(n)$ of length $\ell$, and let
$D^{-1}(S)=\{T,\bar{T}\}$ where $T=(0^{n}\cdots a_{n})$ as in Lemma~\ref{lemma-iterate-aperiodic}.  Let $U=\bar{T}^R$.
\begin{itemize}
\item If $n$ is even, let $V$ be the sequence of length $2\ell-n+2$ consisting of the $\ell+1$
    bits of $T$ followed by the final $\ell-n+1$ bits of $U$ (i.e.\ $U$ with the first $n$ bits omitted).
\item If $n$ is odd, let $V$ be the sequence of length $2\ell-n+3$ consisting of the $\ell+1$
    bits of $T$ followed by the final $\ell-n+2$ bits of $U$ (i.e.\ $U$ with the first $n-1$ bits omitted).
\end{itemize}
In both cases $V$ is an ideal $\mathcal{AOS}(n+1)$.
\end{theorem}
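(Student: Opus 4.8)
The plan is to verify the two defining properties of an ideal $\mathcal{AOS}(n+1)$ separately: that $V$ has the required boundary form, and that $V$ is orientable. The boundary form is immediate. By Lemma~\ref{lemma-iterate-aperiodic}, $T$ begins with $0^{n}$ and $\bar{T}$ begins with $1^{n}$, so $U=\bar{T}^R$ ends with $1^{n}$; since $V$ consists of all of $T$ followed by a suffix of $U$, it begins with $0^{n}$ and ends with $1^{n}$, which is exactly the ideal condition for order $n+1$. The stated lengths then follow by adding the block sizes. As guiding intuition I would record that $D(T)=S$ and, using $D(\mathbf{w}^R)=D(\mathbf{w})^R$ together with $D(\bar{T})=D(T)$, that $D(U)=S^R$; thus $V$ lies over the sequence $S$ followed by $S^R$, which is itself \emph{not} orientable, so the orientability of $V$ must come entirely from the way the two $D$-preimages are fitted together.

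For orientability I would sort the $(n+1)$-tuples of $V$ into those lying wholly inside the $T$-block, those lying wholly inside the appended $U$-block, and those straddling the junction. Theorem~\ref{thm Lempel-orientable-aperiodic} supplies the crucial input: $T$ and $\bar{T}$ form an o-disjoint, primitive, complementary pair of $\mathcal{AOS}(n+1)$s. Since $U=\bar{T}^R$, the forward (resp.\ reversed) $(n+1)$-tuples of $U$ are exactly the reversed (resp.\ forward) tuples of $\bar{T}$. Feeding this into the cross-checks — a $T$-tuple against a $U$-tuple, a $T$-tuple against a reversed $U$-tuple, and their two mirror images — each reduces to one of primitivity (no shared tuple), o-disjointness (no shared reversed tuple), or the orientability of $T$ and of $\bar{T}$. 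Hence all forward and reversed tuples coming from the two blocks are mutually distinct, and the whole burden shifts onto the junction.

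The junction is where the parity of $n$ enters, because $T$ ends with the alternating $n$-tuple $a_n$ while $U$ begins with an alternating $n$-tuple: for even $n$ these two blocks are equal, while for odd $n$ they are complementary (this is the reason the construction trims $n$ bits in the even case and $n-1$ in the odd case). For even $n$ the two identical alternating blocks are glued along a common $n$-segment, and a direct index count shows that every $(n+1)$-tuple of $V$ already lies inside the $T$-block or inside the $U$-block; there are no genuine junction tuples, so orientability follows at once from the previous paragraph. For odd $n$, trimming only $n-1$ bits leaves exactly $n$ straddling tuples $J_0,\dots,J_{n-1}$; the key structural observation here is that appending the first retained bit of $U$ lengthens the alternating run by one, so that $V$ contains a single maximal alternating run of length $n+1$ and $J_0$ is precisely the alternating $(n+1)$-tuple.

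The main obstacle is therefore the odd case: proving the $n$ tuples $J_p$ are pairwise distinct, distinct from their own reverses, and distinct from every block tuple and reversed block tuple. My approach would be to pin each $J_p$ down by the length of its maximal alternating prefix, which I expect to equal $n-p+1$ (using that $a_n$ is a maximal alternating run in $T$, a fact forced by $S$ being ideal with runs of length exactly $n-1$); these distinct prefix lengths separate the $J_p$ from one another outright. To rule out a clash between a junction tuple and a (possibly reversed) block tuple I would again invoke the uniqueness of the maximal alternating run, so that any such coincidence forces an alternating run of the wrong length or in the wrong place; and where the alternating marker alone does not settle matters I would project by $D$ and show that the putative coincidence makes some $n$-window of $S$ equal to its own reverse, contradicting the orientability of $S$ — the same symmetry device already used in the proof of Lemma~\ref{lemma-E}. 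Everything outside this junction analysis is routine given the earlier results.
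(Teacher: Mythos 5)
Your treatment of the even case, the ideality and length bookkeeping, and the cross-checks between the $T$-block and the $U$-block (via primitivity, o-disjointness, and the orientability of $T$ and $\bar{T}$, using $U=\bar{T}^R$) all match the paper's proof. The gap is in the odd case, and it stems from a missed structural fact: because the first $n-1$ bits of $U=\bar{T}^R$ coincide with the last $n-1$ bits of $T$ (both equal the last $n-1$ bits of the alternating block $a_n$, using that $\bar{a}_n$ is a palindrome for odd $n$), the \emph{entire} sequence $U$ is embedded in $V$ starting at position $\ell-n+2$, exactly as in the even case. Consequently $n-1$ of your $n$ ``straddling'' tuples $J_1,\dots,J_{n-1}$ are simply the first $n-1$ windows of the embedded copy of $U$ and are already disposed of by your block analysis; the only genuinely new window of $V$ is $J_0$, the alternating $(n+1)$-tuple, which (together with its reverse, the alternating tuple starting with $1$) cannot occur in $T$, $U$ or their reverses because its image under $D$ is the symmetric tuple $1^n$, which the orientable sequence $S$ (hence also $S^R$) cannot contain. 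That single observation is the whole of the paper's odd-case argument.

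By not noticing this, you commit yourself to a bespoke analysis of all $n$ junction tuples, and the tool you put forward as primary --- separating tuples by the length of their maximal alternating prefix --- does not suffice for the hard part: $J_p$ for $p$ near $n-1$ has an alternating prefix of length only $2$ or $3$, which is completely generic and ``forces'' no alternating run of the wrong length or position in a putative matching block tuple, so the claim that such a coincidence is ruled out by the uniqueness of the long alternating run is false for most $p$. The $D$-projection fallback you gesture at can in fact be made to carry every case --- since $D(J_p)$ for $p\ge 1$ is an $n$-window of $S^R$, a clash with a $T$-tuple or with any reversed tuple yields $\mathbf{s}_n(i)=\mathbf{s}_n(j)^R$ and contradicts orientability of $S$, while clashes among windows of $S^R$ are excluded by the window property --- but you neither identify that $D(J_p)$ lands in $S^R$ nor address the fact that $D$ determines a tuple only up to complementation, so an extra disambiguation step is needed. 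As written, the odd case is a plan with a failing primary step and an undeveloped fallback rather than a proof; the cleanest repair is to adopt for odd $n$ the same overlapping-copy decomposition you already use for even $n$.
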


\begin{proof}
Without loss of generality we suppose throughout that $a_{n}$ starts with a 0 (and hence ends with a 0/1 if $n$ is odd/even).

First suppose that $n$ is even.  Then, since $\bar{T}=1^{n}\cdots \bar{a}_{n}$, $U = \bar{a}_{n}\cdots
1^{n}$, the first $n$ bits of $U$ match the final $n$ bits of $T$. It follows that the
merging of the two sequences to create $V$ does not introduce any additional `new' $(n+1)$-tuples, i.e.\ the
set of $(n+1)$-tuples in $V$ is equal to those appearing in $T$ and $U$.  Hence, since
$U=\bar{T}^R$, and $T$ and $\bar{T}$ are an o-disjoint pair of primitive complementary
$\mathcal{AOS}(n+1)$s (from Theorem~\ref{thm Lempel-orientable-aperiodic}), it follows that $V$ is
an $\mathcal{AOS}(n+1)$.  The fact that it is ideal follows immediately from its method of
construction, and its length is equal to the sum of the lengths of $T$ and $U$ minus $n$, the number of
`overlapped' bits, i.e. $2(\ell+1)-n=2\ell-n+2$.

Now suppose that $n$ is odd.  In this case $U = \bar{a}_{n}\cdots 1^{n}$, and so the final $n-1$
bits of $U$ match the first $n-1$ bits of $T$. The `merging' of $T$ and $U$ to create $V$
introduces a single additional `new' $(n+1)$-tuple in $V$, namely the $(n+1)$-bit alternating tuple
starting with a 0 --- all other tuples occur in $T$ or $U$; similarly, the only additional `new'
$(n+1)$-tuple in $V^R$ is the $(n+1)$-bit alternating tuple starting with a 1 (since $n+1$ is
even).  Neither of these tuples could appear in $T$ or $U$ (or their reverses) since the image
under $D$ of these $(n+1)$-tuples is $1^n$, which cannot appear in $S$ since it is symmetric. It
thus again follows that $V$ is an ideal $\mathcal{AOS}(n+1)$, in this case of length equal to the
sum of the lengths of $T$ and $U$ minus $n-1$, the number of `overlapped' bits, i.e.
$2(\ell+1)-(n-1)=2\ell-n+3$. \qed
\end{proof}

The above construction clearly gives an iterative method of computing an $\mathcal{AOS}(n)$ for
arbitrary $n$, given an ideal $\mathcal{AOS}$ to act as a `starter' in the construction.  The
length of the sequences obtained is given by the following lemma.

\begin{lemma}  \label{lemma-aperiodic-length}
Suppose $S_n$ is an ideal $\mathcal{AOS}(n)$ of length $\ell_n$, and moreover suppose that
$S_{m+n}$ is an ideal $\mathcal{AOS}(m+n)$ of length $\ell_{m+n}$ obtained from $S_n$ using $m$
iterations of the approach given in Theorem~\ref{theorem-iterate-aperiodic}.  Then
\[\ell_{m+n} = 2^m(\ell_n-n+1) + x_m/3 + m+n-1. \]
where $x_m$ is one of $2^m-1$ ($n$ even, $m$ even), $2^m-2$ ($n$ even, $m$ odd), $2^{m+1}-2$ ($n$
odd, $m$ even), or $2^{m+1}-1$ ($n$ odd, $m$ odd).
\end{lemma}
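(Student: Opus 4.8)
The plan is to collapse the iteration of Theorem~\ref{theorem-iterate-aperiodic} into a single linear recurrence, solve that recurrence by an affine substitution that removes its polynomial part, and then evaluate one geometric sum split across the four parity cases. First I would extract the one-step recurrence. Theorem~\ref{theorem-iterate-aperiodic} says that a single iteration turns an ideal $\mathcal{AOS}(k)$ of length $\ell_k$ into an ideal $\mathcal{AOS}(k+1)$ of length $2\ell_k-k+2$ when $k$ is even and $2\ell_k-k+3$ when $k$ is odd. Writing $\delta_k=1$ if $k$ is odd and $\delta_k=0$ if $k$ is even, these two cases combine into the single recurrence $\ell_{k+1}=2\ell_k-k+2+\delta_k$, valid for each order $k$ with $n\le k<m+n$. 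It is important to note that the parity controlling each step is that of the \emph{current order} $k$, not of $n$ or of $m$.

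Next I would linearise by setting $h_k=\ell_k-k+1$. A short computation (try $\ell_k=h_k+\alpha k+\beta$ and choose $\alpha=1$, $\beta=-1$ to cancel the $-k$ term and the constant) reduces the recurrence to the clean form $h_{k+1}=2h_k+\delta_k$. Unfolding $m$ steps starting from $k=n$ gives
\[ h_{m+n}=2^m h_n+\sum_{j=0}^{m-1}2^{\,m-1-j}\,\delta_{n+j}. \]
Since $h_n=\ell_n-n+1$ and $\ell_{m+n}=h_{m+n}+(m+n)-1$, the entire statement reduces to showing that this sum equals $x_m/3$ with the declared value of $x_m$.

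The remaining work, which is the only genuine bookkeeping, is to evaluate $\Sigma_m:=\sum_{j=0}^{m-1}2^{\,m-1-j}\delta_{n+j}$. Here $\delta_{n+j}=1$ exactly when $n+j$ is odd, so the surviving terms correspond to $j$ odd when $n$ is even, and to $j$ even when $n$ is odd. In each case the surviving exponents $m-1-j$ run through an arithmetic progression with common difference $2$, so $\Sigma_m$ is a geometric series of ratio $4$; summing and simplifying yields $(2^m-1)/3$, $(2^m-2)/3$, $(2^{m+1}-2)/3$, or $(2^{m+1}-1)/3$ according to the parities of $n$ and $m$, matching the four stated values of $x_m/3$. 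Substituting back through $\ell_{m+n}=h_{m+n}+(m+n)-1$ then gives $\ell_{m+n}=2^m(\ell_n-n+1)+x_m/3+m+n-1$, as required.

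I expect no conceptual obstacle here: the substitution $h_k=\ell_k-k+1$ is the crux, and it is forced once one tries to cancel the polynomial part. The only step demanding care is keeping the four parity cases straight in the geometric sum, and in particular tracking whether the smallest surviving exponent is $0$ or $1$, since this is exactly what distinguishes the ``$-1$'' from the ``$-2$'' in the numerator of $x_m$. If one prefers to avoid the explicit summation, a clean alternative is direct induction on $m$: verify the base case $m=0$ (where $x_0=0$ in all cases) and then check, via the recurrence $\Sigma_{m+1}=2\Sigma_m+\delta_{n+m}$, that the equivalent relation $x_{m+1}=2x_m+3\delta_{n+m}$ advances $x_m$ correctly between the four parity cases. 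This trades the series evaluation for a short parity-based verification of the recursion.
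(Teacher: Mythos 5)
Your proof is correct and follows essentially the same route as the paper: the paper's proof introduces $\nu_r=\ell_r-r+1$ (the number of $r$-tuples in $S_r$), which is exactly your $h_k$, observes the recurrence $\nu_{r+1}=2\nu_r+\delta_r$ with $\delta_r=1$ iff $r$ is odd, and leaves the rest to ``simple calculations.'' You have simply carried out those calculations explicitly (the unfolding and the four parity cases of the geometric sum), and they check out.
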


\begin{proof}
For any sequence $S_{r}$ ($r\geq n$) we consider the value $\nu_{r}$, i.e.\ the number of
$r$-tuples appearing in $S_r$. Clearly $\nu_r=\ell_r-r+1$.  From
Theorem~\ref{theorem-iterate-aperiodic}, we immediately have that $\nu_{n+1}=2\nu_n$ if $n$ is
even, and $\nu_{n+1}=2\nu_n+1$ if $n$ is odd. The result then follows from some simple
calculations. \qed
\end{proof}

We now give a simple example of the use of the above iterative construction, yielding an infinite
family of aperiodic orientable sequences.

\begin{example}
We start by observing that $S_2=01$ is an ideal $\mathcal{AOS}(2)$ of length 2. This is clearly
optimally long. Now $D^{-1}(S)=\{001,110\}$, i.e. $T=001$ and $U=011$.  Since 2 is even, we overlap
the sequences by $n=2$ positions to obtain $S_3=0011$.  Note that $S_3$ is also of optimal length since
there are only four asymmetric 3-tuples, two of which appear.

Repeating the construction, applying $D^{-1}$ to $0011$ gives $T=00010$ and $U=10111$.  Overlapping
them by 2($=n-3$) bit-positions we get $S_4=00010111$ of length 8.  This involves adding the extra
4-tuple $0101$, which could not appear in either $T$ or $U$ as its image under $D$ is $111$, which
is symmetric.  $S_4$ is also optimally long according to Table~\ref{Burns-table}.

We next obtain $S_5=00001101001111$ of length 14. Continuing this process gives sequences of the
lengths in Table~\ref{AOS-family-table}.
\end{example}

\begin{table}[htb]
\caption{A family of aperiodic orientable sequences}  \label{AOS-family-table}
\begin{center}
\begin{tabular}{cc} \hline
Order ($n$) & Sequence length ($\ell$) \\ \hline
                  2     &      2 \\
                  3     &      4 \\
                  4     &      8 \\
                  5     &      14 \\
                  6     &      26 \\
                  7     &      48 \\
                  8     &      92 \\
                  9     &      178 \\
                  10    &      350 \\
                  $\cdots$ & $\cdots$ \\
                  $2r$ & $2^{2r-2}+(2^{2r-2}-1)/3+2r-1$ \\
                  $2r+1$ & $2^{2r-1}+(2^{2r-1}-2)/3+2r$ \\ \hline
\end{tabular}
\end{center}
\end{table}

Inspection of Table~\ref{AOS-family-table} reveals that for $n\leq7$ the sequences are optimally
long (according to Table~\ref{Burns-table}), but for larger values of $n$ they are not. However,
they are very simple to construct, and from Lemma~\ref{lemma-aperiodic-length} (see also
Table~\ref{AOS-family-table}) the length of the sequence $S_n$ is greater than $2^{n}/3$. A simple
upper bound (see Lemma 15 of \cite{Burns93}) means that the length of any $\mathcal{AOS}(n)$ is at
most $2^{n-1}-2^{\lfloor(n-1)/2\rfloor}+n-1$; that is, the sequences generated by this approach
have lengths at least 2/3 of the optimal values.

\section{Conclusions and possible future work}  \label{sec-conclusions}

We have described how the Lempel homomorphism can be applied to recursively generate infinite
families of both periodic and aperiodic orientable sequences.  We have given examples of infinite
families of orientable sequences, both periodic and aperiodic, generated using the construction
methods and having, in both cases, close to optimal period/length --- the periodic sequences have
period at least 63\% of the optimal value, and the aperiodic sequences have length at least 2/3 of
optimal. Moreover, sequences with greater period/length can be obtained should longer `starter'
sequences for the recursions be chosen. The method of construction in both cases is direct very
simple, partially answering the question posed by Dai et al.\ \cite{Dai93} and quoted in
Section~\ref{sec_intro}. They only \emph{partially} answer the question since the periods/lengths
of the sequences produced are not asymptotically optimal.  The methods are also of low complexity
in terms of time --- they are trivially linear in the sequence length; however, the storage
complexity is high since the entire sequence needs to be available to perform the recursion
operation.

Since in both cases the sequences are generated using a simple recursive approach, it may well be
possible to devise simple encoding and decoding methods, i.e.\ algorithms that, for an
$\mathcal{OS}(n)$ or an $\mathcal{AOS}(n)$, enable a position value to be converted into the
$n$-tuple that occurs in that position in the sequence or its reverse (encoding) or vice versa
(decoding).  Such algorithms are clearly of value in potential position-location applications of
orientable sequences --- see, for example, \cite{Burns93,Mitchell96}.

Devising such algorithms is left for future work.  Other possible directions for future research
include generalising the construction methods given in this paper, both to arbitrary size alphabets
and to the multi-dimensional case.

\section*{Acknowledgements}

The authors would like to thank Joe Sawada and the anonymous referees for their valuable
corrections and suggestions for improvement.


\begin{thebibliography}{10}

\bibitem{Berkowitz16}
R.~Berkowitz and S.~Kopparty, \emph{Robust positioning patterns}, Proceedings
  of the Twenty-Seventh Annual {ACM-SIAM} Symposium on Discrete Algorithms,
  {SODA} 2016, Arlington, VA, USA, January 10--12, 2016 (R.~Krauthgamer, ed.),
  {SIAM}, 2016, pp.~1937--1951.

\bibitem{Bruckstein12}
A.~M. Bruckstein, T.~Etzion, R.~Giryes, N.~Gordon, R.~J. Holt, and
  D.~Shuldiner, \emph{Simple and robust binary self-location patterns}, {IEEE}
  Trans. Inf. Theory \textbf{58} (2012), no.~7, 4884--4889.

\bibitem{Burns92}
J.~Burns and C.~J. Mitchell, \emph{Coding schemes for two-dimensional position
  sensing}, Tech. Report HPL--92--19, January 1992,
  \url{https://hplabs.itcs.hp.com/techreports/92/HPL-92-19.html}.

\bibitem{Burns93}
\bysame, \emph{Coding schemes for two-dimensional position sensing},
  Cryptography and Coding III (M.~J. Ganley, ed.), Oxford University Press,
  1993, pp.~31--66.

\bibitem{Chee19}
Y.~M. Chee, D.~T. Dao, H.~M. Kiah, S.~Ling, and H.~Wei, \emph{Binary robust
  positioning patterns with low redundancy and efficient locating algorithms},
  Proceedings of the Thirtieth Annual {ACM-SIAM} Symposium on Discrete
  Algorithms, {SODA} 2019, San Diego, California, USA, January 6--9, 2019
  (T.~M. Chan, ed.), {SIAM}, 2019, pp.~2171--2184.

\bibitem{Chee20}
\bysame, \emph{Robust positioning patterns with low redundancy}, {SIAM} J.
  Comput. \textbf{49} (2020), no.~2, 284--317.

\bibitem{Chung92}
F.~Chung, P.~Diaconis, and R.~Graham, \emph{Universal cycles for combinatorial
  structures}, Discrete Mathematics \textbf{{\bf 110}} (1992), 43--59.

\bibitem{Dai93}
Z.-D. Dai, K.~M. Martin, M.~J.~B. Robshaw, and P.~R. Wild, \emph{Orientable
  sequences}, Cryptography and Coding III (M.~J. Ganley, ed.), Oxford
  University Press, Oxford, 1993, pp.~97--115.

\bibitem{C45}
N.~G. de~Bruijn, \emph{A combinatorial problem}, Proceedings Nederlandse
  Akademie van Wetenschappen \textbf{{\bf 49}} (1946), 758--764.

\bibitem{C98}
T.~Etzion, \emph{The depth distribution --- {A} new characterization for linear
  codes}, IEEE Transactions on Information Theory \textbf{{\bf 43}} (1997),
  1361--1363.

\bibitem{Fredricksen82}
H.~Fredricksen, \emph{A survey of full length nonlinear shift register cycle
  algorithms}, {SIAM} Review \textbf{{\bf 24}} (1982), 195--221.

\bibitem{Golomb67}
S.~W. Golomb, \emph{Shift register sequences}, Holden-Day, San Francisco, 1967.

\bibitem{C40}
I.~J. Good, \emph{Normally recurring decimals}, Journal of the London
  Mathematical Society \textbf{{\bf 21}} (1946), 167--169.

\bibitem{Jackson09}
B.~Jackson, B.~Stevens, and G.~Hurlbert, \emph{Research problems on {Gray}
  codes and universal cycles}, Discret. Math. \textbf{309} (2009), no.~17,
  5341--5348.

\bibitem{Lempel70}
A.~Lempel, \emph{On a homomorphism of the de {B}ruijn graph and its application
  to the design of feedback shift registers}, {IEEE} Transactions on Computers
  \textbf{{\bf C-19}} (1970), 1204--1209.

\bibitem{Mitchell96}
C.~J. Mitchell, T.~Etzion, and K.~G. Paterson, \emph{A method for constructing
  decodable de {B}ruijn sequences}, IEEE Transactions on Information Theory
  \textbf{42} (1996), 1472--1478.

\bibitem{C35}
E.~M. Petriu, \emph{Absolute-type position transducers using a pseudorandom
  encoding}, {IEEE} Transactions on Instrumentation and Measurement
  \textbf{{\bf IM-36}} (1987), 950--955.

\bibitem{Sawada13}
J.~Sawada, A.~Williams, and D.~C.-H. Wong, \emph{Universal cycles for
  weight-range binary strings}, Combinatorial Algorithms --- 24th International
  Workshop, {IWOCA} 2013, Rouen, France, July 10--12, 2013, Revised Selected
  Papers (T.~Lecroq and L.~Mouchard, eds.), Lecture Notes in Computer Science,
  vol. 8288, Springer, 2013, pp.~388--401.

\bibitem{Szentandrasi12}
I.~Szentandr{\'{a}}si, M.~Zachari{\'{a}}s, J.~Havel, A.~Herout,
  M.~Dubsk{\'{a}}, and R.~Kajan, \emph{Uniform marker fields: {Camera}
  localization by orientable {De Bruijn} tori}, 11th {IEEE} International
  Symposium on Mixed and Augmented Reality, {ISMAR} 2012, Atlanta, GA, USA,
  November 5--8, 2012, {IEEE} Computer Society, 2012, pp.~319--320.

\end{thebibliography}
\providecommand{\bysame}{\leavevmode\hbox to3em{\hrulefill}\thinspace}
\providecommand{\MR}{\relax\ifhmode\unskip\space\fi MR }
\providecommand{\MRhref}[2]{%
  \href{http://www.ams.org/mathscinet-getitem?mr=#1}{#2}
}
\providecommand{\href}[2]{#2}

\end{document}